\theoremstyle{plain}
\newtheorem{prop}[subsection]{Proposition}
\newtheorem{thm}[subsection]{Theorem}
\theoremstyle{definition}
\newtheorem{defn}[subsection]{Definition}
\theoremstyle{remark}
\newtheorem{rem}[subsection]{Remark}
\newcommand{\hh}{{ \mathsf{h} }}
\newcommand{\ZZ}{{ \mathbb{Z} }}
\newcommand{\sSet}{{ \mathsf{sSet} }}
\newcommand{\Mod}{{ \mathsf{Mod} }}
\newcommand{\ModR}{{ \mathsf{Mod}_\capR }}
\newcommand{\Spectra}{{ \mathsf{Sp}^\Sigma }}
\newcommand{\M}{{ \mathsf{M} }}
\newcommand{\Alg}{{ \mathsf{Alg} }}
\newcommand{\LL}{{ \mathsf{L} }}
\newcommand{\RR}{{ \mathsf{R} }}
\newcommand{\TQ}{{ \mathsf{TQ} }}
\newcommand{\AlgO}{{ \Alg_\capO }}
\newcommand{\capA}{{ \mathcal{A} }}
\newcommand{\capO}{{ \mathcal{O} }}
\newcommand{\capR}{{ \mathcal{R} }}
\newcommand{\id}{{ \mathrm{id} }}
\newcommand{\sslash}{{ /\!\!/ }}
\newcommand{\Smash}{{ \,\wedge\, }}
\newcommand{\tensor}{{ \otimes }}
\newcommand{\tensordot}{{ \dot{\tensor} }}
\newcommand{\wequiv}{{ \ \simeq \ }}
\newcommand{\Iso}{{  \ \cong \ }}
\newcommand{\Equal}{{ \ = \ }}
\newcommand{\function}[3]{{ {#1}\colon\thinspace{#2}\rightarrow{#3} }}
\DeclareMathOperator*{\hocolim}{hocolim}
\DeclareMathOperator*{\colim}{colim}
\DeclareMathOperator{\Hombold}{\mathbf{Hom}}
\title[Topological {Q}uillen localization of structured ring spectra]{Topological {Q}uillen localization\\ of structured ring spectra}
\author{John E. Harper}
\author{Yu Zhang}
\address{Department of Mathematics, The Ohio State University, Newark, 1179 University Dr, Newark, OH 43055, USA}
\email{harper.903@math.osu.edu}
\address{Department of Mathematics, The Ohio State University, 231 West 18th Ave, Columbus, OH 43210, USA}
\email{zhang.4841@osu.edu}
\begin{document}

\maketitle

\begin{abstract}
The aim of this short paper is two-fold: (i) to construct a $\TQ$-localization functor on algebras over a spectral operad $\capO$, in the case where no connectivity assumptions are made on the $\capO$-algebras, and (ii) more generally, to establish the associated $\TQ$-local homotopy theory as a left Bousfield localization of the usual model structure on $\capO$-algebras, which itself is almost never left proper, in general.  In the resulting $\TQ$-local homotopy theory, the ``weak equivalences'' are the $\TQ$-homology equivalences, where ``$\TQ$-homology'' is short for topological Quillen homology, which is also weakly equivalent to stabilization of $\capO$-algebras. More generally, we establish these results for $\TQ$-homology with coefficients in a spectral algebra $\capA$. A key observation, that goes back to the work of Goerss-Hopkins on moduli problems, is that the usual left properness assumption may be replaced with a strong cofibration condition in the desired subcell lifting arguments: Our main result is that the $\TQ$-local homotopy theory can be established (e.g., a semi-model structure in the sense of Goerss-Hopkins and Spitzweck, that is both cofibrantly generated and simplicial) by localizing with respect to a set of strong cofibrations that are $\TQ$-equivalences.
\end{abstract}

\section{Introduction}

In this paper we are working in the framework of algebras over an operad in symmetric spectra \cite{Hovey_Shipley_Smith, Schwede_book_project}, and more generally, in $\capR$-modules, where $\capO[0]=*$ (the trivial $\capR$-module); such $\capO$-algebras are non-unital. Here, $\capR$ is any commutative ring spectrum (i.e., any commutative monoid object in the category $(\Spectra,\tensor_S,S)$ of symmetric spectra) and we denote by $(\ModR,\wedge,\capR)$ the closed symmetric monoidal category of $\capR$-modules. 

Topological Quillen homology (or $\TQ$-homology) is the precise analog for $\capO$-algebras of singular homology for spaces, and is also weakly equivalent to stabilization of $\capO$-algebras \cite{Basterra_Mandell, Harper_Hess, Pereira_spectral_operad}. A useful starting point is \cite{Goerss_f2_algebras, Miller, Quillen}, together with \cite{Basterra, Basterra_Mandell, Basterra_Mandell_thh} and \cite{Ching_Harper_derived_Koszul_duality, Kuhn, Kuhn_adams_filtration, Kuhn_Pereira}; see also \cite{Ching_Harper, Ching_Harper_nilpotent_Whitehead, Fresse_lie_theory, Fresse, Harper_bar_constructions, Rezk}.

When $\TQ$-homology is iterated, built into a cosimplicial $\TQ$-resolution, and then glued all together with a homotopy limit, it gives the $\TQ$-completion \cite{Harper_Hess} (analogous to Bousfield-Kan \cite{Bousfield_Kan} completion for spaces). It is proved in \cite{Ching_Harper_derived_Koszul_duality} that $\TQ$-completion recovers the original $\capO$-algebra $X$, up to weak equivalence, provided that $X$ is 0-connected---in other words, 0-connected $\capO$-algebras are $\TQ$-complete; here, $\capO,\capR$ are assumed to be $(-1)$-connected.

So what about the larger class, for instance, of homotopy pro-nilpotent $\capO$-algebras---are they also $\TQ$-complete? This paper is a first step in attacking this problem; i.e., to construct the $\TQ$-localization as a ``better'' model than $\TQ$-completion for ``the part of an $\capO$-algebra that $\TQ$-homology sees''. $\TQ$-completion is known to only be ``the right model'' when the $\capO$-algebra $X$ is $\TQ$-good (i.e., when the comparison map from $X$ to its $\TQ$-completion is a $\TQ$-equivalence) analogous to the situation for spaces \cite{Bousfield_Kan}, but perhaps homotopy pro-nilpotent $\capO$-algebras are not $\TQ$-good, in general. So our attack on the problem is to first build (in this paper) $\TQ$-localization by establishing the $\TQ$-local homotopy theory for $\capO$-algebras (without any connectivity assumptions). Our motivation for constructing the $\TQ$-localization is that it always gives ``the right model'' for the part of the $\capO$-algebra X that $\TQ$-homology sees (at the expense of a much larger construction); just like Bousfield’s localization construction \cite{Bousfield_localization_spaces} for pointed spaces.

We follow closely the arguments in Bousfield \cite{Bousfield_localization_spaces}, Goerss-Jardine \cite{Goerss_Jardine}, and Jardine \cite{Jardine_local_homotopy_theory}; see also Dwyer \cite{Dwyer_localizations} for a useful introduction to these ideas, along with \cite{Bousfield_Kan, Farjoun_LNM, Hilton_Mislin_Roitberg, May_Ponto} in the context of spaces. To make the localization techniques work in the context of $\capO$-algebras, we exploit the cellular ideas in Hirschhorn \cite{Hirschhorn}. A potential wrinkle is the well-known failure (Remark \ref{rem:left_properness}), in general, of $\capO$-algebras to be left proper (e.g., associative ring spectra are not left proper); we show that exploiting an observation in Goerss-Hopkins \cite{Goerss_Hopkins_moduli_spaces, Goerss_Hopkins_moduli_problems} enables the desired topological Quillen localization to be constructed by localizing with respect to a particular set of \emph{strong} cofibrations that are $\TQ$-equivalences; the establishment of this $\TQ$-localization functor and the associated $\TQ$-local homotopy theory---as a semi-model structure that is cofibrantly generated and simplicial---are our main results; in other words, in this paper we establish the $\TQ$-local homotopy theory for $\capO$-algebras (Theorem \ref{thm:TQ_local_homotopy_theory}), essentially by re-examining ideas of Goerss-Hopkins \cite{Goerss_Hopkins, Goerss_Hopkins_moduli_problems} and Bousfield \cite{Bousfield_localization_spaces}, together with the technical work in Goerss-Jardine \cite{Goerss_Jardine} and Jardine \cite{Jardine_local_homotopy_theory}, in light of the cellular ideas and techniques in Hirschhorn \cite{Hirschhorn}.

As an application of the $\TQ$-local homotopy theory established here, together with the completion results in \cite{Ching_Harper_nilpotent_Whitehead}, it is shown in \cite{Zhang_homotopy_pro_nilpotent} that every homotopy pro-nilpotent $\capO$-algebra is $\TQ$-local; this improves the result in \cite{Ching_Harper_derived_Koszul_duality} that 0-connected $\capO$-algebras are $\TQ$-complete (assuming $\capO,\capR$ are $(-1)$-connected), to the much larger class of homotopy pro-nilpotent $\capO$-algebras, provided that one replaces ``$\TQ$-completion'' with ``$\TQ$-localization'', and is closely related to (and partially motivated by) a conjecture of Francis-Gaitsgory \cite[3.4.5]{Francis_Gaitsgory}. The $\TQ$-local homotopy theory developed here may also find potential applications for studying the closely related invariants in \cite{Galatius_Kupers_Randal_Williams, Heuts}.

To keep this paper appropriately concise, we freely use notation from \cite{Harper_Hess}.

\subsection*{Acknowledgments}

The authors would like to thank an anonymous referee for helpful comments and suggestions. The first author would like to thank Bill Dwyer, Emmanuel Farjoun, and Rick Jardine for useful discussions, at an early stage, on  localizations in homotopy theory. The authors would like to thank Matt Carr, Crichton Ogle, Nath Rao, and David White for helpful comments related to this work.

\section{$\TQ$-homology of an $\capO$-algebra with coefficients in $\capA$}

If $X$ is an $\capO$-algebra, then we may factor the map $*\rightarrow X$
\begin{align*}
  {*}\rightarrow\tilde{X}\xrightarrow{\wequiv}X
\end{align*}
as a cofibration followed by an acyclic fibration; we are using the positive flat stable model structure (see, for instance, \cite{Harper_Hess}). In particular, $\tilde{X}$ is a cofibrant replacement of $X$.

Consider the canonical map of operads $\function{f}{\capO}{\tau_1\capO}$ and any map $\function{\alpha}{\capO[1]}{\capA}$ of $\capR$-algebras. These maps induce adjunctions of the form
\begin{align}
\label{eq:basic_adjunctions}
\xymatrix{
  \AlgO\ar@<0.5ex>[r]^-{f_*} &
  \Alg_{\tau_1\capO}\Equal\Mod_{\capO[1]}\ar@<0.5ex>[l]^-{f^*}\ar@<0.5ex>[r]^-{\alpha_*} &
  \Mod_{\capA}\ar@<0.5ex>[l]^-{\alpha^*}
}
\end{align}
with left adjoints on top, where $f_*(X):=\tau_1\capO\circ_\capO(X)$ and $f^*$ denotes ``forgetting along $f$ of the left $\tau_1\capO$-action'', and similarly, $\alpha_*(Y):=\capA\Smash_{\capO[1]} Y$ and $\alpha^*$ denotes ``forgetting along $\alpha$ of the left $\capA$-action''; for short, we sometimes refer to $f^*$ and $\alpha^*$ as the indicated ``forgetful functors''. For notational convenience purposes, we denote by $Q:=\alpha_*f_*$ the composite of left adjoints in \eqref{eq:basic_adjunctions} and by $U:=f^*\alpha^*$ the composite of right adjoints in \eqref{eq:basic_adjunctions}. It follows that $(Q,U)$ fit into an adjunction of the form
\begin{align}
\label{eq:TQ_with_coefficients_adjunction}
\xymatrix{
  \AlgO\ar@<0.5ex>[r]^-{Q} &
  \Mod_\capA\ar@<0.5ex>[l]^-{U}
}
\end{align}
with left adjoint on top; here, $Q$ is for indecomposable ``quotient'' and $U$ is the indicated forgetful functor.

\begin{defn}
If $X$ is an $\capO$-algebra, then its \emph{$\TQ$-homology} is 
\begin{align*}
  \TQ(X):=\tau_1\capO\circ_\capO^\hh(X):=\RR f^*(\LL f_*(X))\wequiv\tau_1\capO\circ_\capO(\tilde{X})
\end{align*}
the $\capO$-algebra defined by the indicated composite of total right and left derived functors, 
and its \emph{$\TQ$-homology with coefficients in $\capA$}, is the $\capO$-algebra
\begin{align*}
  \TQ^\capA(X):=\RR U(\LL Q(X))\wequiv Q(\tilde{X})\Equal
  \capA\Smash_{\capO[1]}\bigl(\tau_1\capO\circ_\capO(\tilde{X})\bigr)
\end{align*}
In particular, if the algebra map $\alpha=\id$, then $\TQ^{\capO[1]}(X)\wequiv \TQ(X)$. Here, $\TQ$-homology is short for ``topological Quillen homology'' which is weakly equivalent to stabilization of $\capO$-algebras.
\end{defn}

\section{Detecting $\TQ^\capA$-local $\capO$-algebras}

\begin{defn}
\label{defn:strong_cofibrations}
A map $\function{i}{A}{B}$ of $\capO$-algebras is a \emph{strong cofibration} if it is a cofibration between cofibrant objects in $\AlgO$.
\end{defn}

\begin{defn}
\label{defn:TQ_local_O_algebras}
Let $X$ be an $\capO$-algebra. We say that $X$ is \emph{$\TQ^\capA$-local} if (i) $X$ is fibrant in $\AlgO$ and (ii) every strong cofibration $A\rightarrow B$ that induces a weak equivalence $\TQ^\capA(A)\wequiv\TQ^\capA(B)$ on $\TQ^\capA$-homology, induces a weak equivalence
\begin{align}
\label{eq:induced_map_on_mapping_complexes}
  \Hombold(A,X)\xleftarrow{\wequiv}\Hombold(B,X)
\end{align}
on mapping spaces in $\sSet$.
\end{defn}

\begin{rem}
The intuition here is that the derived space of maps into a $\TQ^\capA$-local $\capO$-algebra cannot distinguish between $\TQ^\capA$-equivalent $\capO$-algebras (Proposition \ref{prop:mapping_into_TQ_local_O_algebras}), up to weak equivalence.
\end{rem}

Evaluating the map \eqref{eq:induced_map_on_mapping_complexes} at level 0 gives a surjection
\begin{align*}
  \hom(A,X)\leftarrow\hom(B,X)
\end{align*}
of sets, since acyclic fibrations in $\sSet$ are necessarily levelwise surjections. This suggests that $\TQ^\capA$-local $\capO$-algebras $X$ might be detected by a right lifting property and motivates the following classes of maps (Proposition \ref{prop:detecting_TQ_local_O_algebras_part_1}); compare with Bousfield \cite{Bousfield_localization_spaces}.

\begin{defn}[$\TQ^\capA$-local homotopy theory: Classes of maps]
\label{defn:classes_of_maps_TQ_local_homotopy_theory}
A map $\function{f}{X}{Y}$ of $\capO$-algebras is
\begin{itemize}
\item[(i)] a \emph{$\TQ^\capA$-equivalence} if it induces a weak equivalence $\TQ^\capA(X)\wequiv\TQ^\capA(Y)$
\item[(ii)] a \emph{$\TQ^\capA$-cofibration} if it is a cofibration in $\AlgO$
\item[(iii)] a \emph{$\TQ^\capA$-fibration} if it has the right lifting property with respect to every cofibration that is a $\TQ^\capA$-equivalence
\item[(iv)] a \emph{weak $\TQ^\capA$-fibration} (or \emph{$\TQ^\capA$-injective fibration; see Jardine \cite{Jardine_local_homotopy_theory}}) if it has the right lifting property with respect to every strong cofibration that is a $\TQ^\capA$-equivalence
\end{itemize}
A cofibration (resp. strong cofibration) is called \emph{$\TQ^\capA$-acyclic} if it is also a $\TQ^\capA$-equivalence. Similarly, a $\TQ^\capA$-fibration (resp. weak $\TQ^\capA$-fibration) is called \emph{$\TQ^\capA$-acyclic} if it is also a $\TQ^\capA$-equivalence.
\end{defn}

\begin{rem}
\label{rem:left_properness}
The additional class of maps (iv) naturally arises in the $\TQ^\capA$-local homotopy theory established below (Theorem \ref{thm:TQ_local_homotopy_theory}); this is a consequence of the fact that the model structure on $\AlgO$ is almost never left proper, in general (e.g., associative ring spectra are not left proper); see, for instance, Goerss-Hopkins \cite[2.3]{Goerss_Hopkins_moduli_spaces}. In the very few special cases where it happens that $\AlgO$ is left proper (e.g., commutative ring spectra are left proper), then the class of weak $\TQ^\capA$-fibrations will be identical to the class of $\TQ^\capA$-fibrations.
\end{rem}

\begin{prop}
\label{prop:relations_between_classes_of_maps}
The following implications are satisfied
\begin{align*}
  \text{strong cofibration}\ &\Longrightarrow\ 
  \text{cofibration}\\
  \text{weak equivalence}\ &\Longrightarrow\ 
  \text{$\TQ^\capA$-equivalence}\\
  \text{$\TQ^\capA$-fibration}\ &\Longrightarrow\ 
  \text{weak $\TQ^\capA$-fibration}\ \Longrightarrow\ 
  \text{fibration}
\end{align*}
for maps of $\capO$-algebras.
\end{prop}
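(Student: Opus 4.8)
The plan is to verify each of the five implications in Proposition~\ref{prop:relations_between_classes_of_maps} by unwinding the definitions, with the understanding that most follow immediately and only one requires a genuine argument. I will handle the implications in the three groups as displayed. The first implication, strong cofibration $\Longrightarrow$ cofibration, is immediate from Definition~\ref{defn:strong_cofibrations}: a strong cofibration is \emph{defined} to be a cofibration (between cofibrant objects), so forgetting the cofibrancy hypotheses on source and target gives a cofibration. Symmetrically, for the fibration chain, $\TQ^\capA$-fibration $\Longrightarrow$ weak $\TQ^\capA$-fibration follows from the first implication by contravariance of the lifting property: a $\TQ^\capA$-fibration lifts against \emph{all} cofibrations that are $\TQ^\capA$-equivalences, hence in particular against the smaller class of \emph{strong} cofibrations that are $\TQ^\capA$-equivalences, which is exactly the defining lifting property (iv) of a weak $\TQ^\capA$-fibration.

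The implication I expect to require a little genuine content is weak equivalence $\Longrightarrow$ $\TQ^\capA$-equivalence. The point is that $\TQ^\capA = \RR U \circ \LL Q$ is built from (total) derived functors of a Quillen adjunction, and derived functors carry weak equivalences to weak equivalences. Concretely, if $\function{f}{X}{Y}$ is a weak equivalence in $\AlgO$, then cofibrant replacement gives a weak equivalence $\function{\tilde f}{\tilde X}{\tilde Y}$ of cofibrant objects; applying the left Quillen functor $Q$ (left adjoint in \eqref{eq:TQ_with_coefficients_adjunction}) preserves weak equivalences between cofibrant objects by Ken Brown's lemma, so $Q(\tilde f)$ is a weak equivalence. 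Since by the Definition in the previous section $\TQ^\capA(X)\wequiv Q(\tilde X)$ and $\TQ^\capA(Y)\wequiv Q(\tilde Y)$ naturally, this exhibits the induced map $\TQ^\capA(X)\to\TQ^\capA(Y)$ as a weak equivalence, which is precisely the assertion that $f$ is a $\TQ^\capA$-equivalence in the sense of Definition~\ref{defn:classes_of_maps_TQ_local_homotopy_theory}(i).

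For the final implication weak $\TQ^\capA$-fibration $\Longrightarrow$ fibration, the idea is to compare lifting properties against the generating acyclic cofibrations of the ordinary model structure on $\AlgO$. A map is a fibration exactly when it has the right lifting property against every acyclic cofibration; so it suffices to show that every acyclic cofibration is a strong cofibration that is a $\TQ^\capA$-equivalence, so that the defining lifting property (iv) of a weak $\TQ^\capA$-fibration already supplies the needed lifts. Here the $\TQ^\capA$-equivalence part is free from the second implication just proved (an acyclic cofibration is in particular a weak equivalence, hence a $\TQ^\capA$-equivalence). The subtlety is the \emph{strong} cofibration condition, which demands cofibrancy of both source and target. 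The cleanest route is to reduce, via the standard cofibrant-generation and retract arguments in $\AlgO$ (as in \cite{Harper_Hess}), to generating acyclic cofibrations with cofibrant domain, and to observe that in the positive flat stable model structure the relevant cells attach to cofibrant objects, so the generating acyclic cofibrations may be taken between cofibrant objects. This reduction is the main obstacle, since one must be careful that restricting to lifts against this possibly smaller generating set still detects all fibrations; the resolution is that the lifting property (iv) is stable under the pushout, transfinite composition, and retract operations that build the full class of acyclic cofibrations from the generators, exactly as in the cellular framework of Hirschhorn \cite{Hirschhorn}.
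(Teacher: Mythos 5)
Your proof is correct and follows essentially the same route as the paper's: the first three implications are handled identically, and the last implication hinges on the same key fact in both arguments, namely that the generating acyclic cofibrations of $\AlgO$ have cofibrant domains (the paper cites Shipley for this), hence are $\TQ^\capA$-acyclic strong cofibrations against which a weak $\TQ^\capA$-fibration lifts. The only difference is cosmetic: your closing detour through saturation (pushouts, transfinite compositions, retracts) is unnecessary, since fibrations in the cofibrantly generated model structure on $\AlgO$ are detected by the right lifting property against the generating set alone, which is exactly how the paper concludes.
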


\begin{proof}
The first implication is immediate and the second is because $\TQ^\capA$ preserves weak equivalences, by construction. The third implication is because the class of $\TQ^\capA$-acyclic cofibrations contains the class of $\TQ^\capA$-acyclic strong cofibrations. For the last implication, recall that a map is a fibration in $\AlgO$ if and only if it has the right lifting property with respect to the set of generating acyclic cofibrations. Since the generating acyclic cofibrations have cofibrant domains \cite{Shipley_commutative_ring_spectra}, they are contained in the class of strong cofibrations that are weak equivalences, and hence they are contained in the class of $\TQ^\capA$-acyclic strong cofibrations. It follows immediately that every weak $\TQ^\capA$-fibration is a fibration.
\end{proof}

\begin{prop}
\label{prop:mapping_into_TQ_local_O_algebras}
Let $X$ be a fibrant $\capO$-algebra. Then $X$ is $\TQ^\capA$-local if and only if every map $\function{f}{A}{B}$ between cofibrant $\capO$-algebras that is a $\TQ^\capA$-equivalence induces a weak equivalence \eqref{eq:induced_map_on_mapping_complexes} on mapping spaces.
\end{prop}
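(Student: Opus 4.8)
The plan is to reduce to the definition of $\TQ^\capA$-local via a functorial factorization, the point being that the definition only tests \emph{strong} cofibrations that are $\TQ^\capA$-equivalences, whereas the proposition asks for \emph{all} $\TQ^\capA$-equivalences between cofibrant objects. The ``if'' direction is immediate: every strong cofibration is in particular a map between cofibrant objects (Definition \ref{defn:strong_cofibrations}), so the stated hypothesis on maps between cofibrant objects specializes to condition (ii) of Definition \ref{defn:TQ_local_O_algebras}, while condition (i) holds since $X$ is assumed fibrant.

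For the ``only if'' direction, suppose $X$ is $\TQ^\capA$-local and let $\function{f}{A}{B}$ be a $\TQ^\capA$-equivalence between cofibrant $\capO$-algebras. First I would functorially factor $f$ in $\AlgO$ as $A\xrightarrow{j} C\xrightarrow{p} B$, with $j$ a cofibration and $p$ an acyclic fibration. Since $A$ is cofibrant and $j$ is a cofibration, the composite $*\to A\to C$ is a cofibration, so $C$ is cofibrant and $j$ is therefore a strong cofibration. Next I would check that $j$ is a $\TQ^\capA$-equivalence: the acyclic fibration $p$ is a weak equivalence, hence a $\TQ^\capA$-equivalence by Proposition \ref{prop:relations_between_classes_of_maps}, and since $\TQ^\capA$ is a functor, the class of $\TQ^\capA$-equivalences satisfies two-out-of-three; as $f=pj$ and both $f,p$ are $\TQ^\capA$-equivalences, so is $j$. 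Thus $j$ is a $\TQ^\capA$-acyclic strong cofibration, and $\TQ^\capA$-locality of $X$ gives that $\function{j^*}{\Hombold(C,X)}{\Hombold(A,X)}$ is a weak equivalence in $\sSet$.

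It then remains to handle the acyclic fibration $p$. Here I would invoke the simplicial structure on $\AlgO$: since $X$ is fibrant and $p$ is a weak equivalence between cofibrant objects, the functor $\Hombold(-,X)$ carries $p$ to a weak equivalence $\function{p^*}{\Hombold(B,X)}{\Hombold(C,X)}$ in $\sSet$. Combining the two, $f^*=j^*p^*$ is a weak equivalence by two-out-of-three in $\sSet$, which is exactly the desired map \eqref{eq:induced_map_on_mapping_complexes}. The one non-formal ingredient---and the step I expect to carry the real weight---is the homotopy invariance of the mapping space just used, i.e., that $\Hombold(-,X)$ sends weak equivalences between cofibrant objects to weak equivalences whenever $X$ is fibrant; this is a consequence of the pushout-product (SM7) axiom for the simplicial model structure on $\AlgO$ together with Ken Brown's lemma, and everything else in the argument is formal bookkeeping with factorizations and two-out-of-three.
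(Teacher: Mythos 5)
Your proposal is correct and follows essentially the same route as the paper: factor $f$ as a (necessarily strong) cofibration followed by an acyclic fibration, deduce that the cofibration is a $\TQ^\capA$-equivalence by two-out-of-three, apply $\TQ^\capA$-locality to it, and handle the acyclic fibration via homotopy invariance of $\Hombold(-,X)$ for $X$ fibrant. The paper simply asserts that last step, whereas you justify it via SM7 and Ken Brown's lemma, which is exactly the right justification.
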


\begin{proof}
It suffices to verify the ``only if'' direction. Consider any map $\function{f}{A}{B}$ between cofibrant $\capO$-algebras that is a $\TQ^\capA$-equivalence. Factor $f$ as a cofibration $i$ followed by an acyclic fibration $p$ in $\AlgO$. Since $f$ is a $\TQ^\capA$-equivalence and $p$ is a weak equivalence, it follows that $i$ is a $\TQ^\capA$-equivalence.  The left-hand commutative diagram induces 
\begin{align*}
\xymatrix{
  A\ar[r]^-{f}\ar[d]^-{i} & B\\
  B'\ar@/_0.5pc/[ur]_-{p}
}\quad\quad
\xymatrix{
  \Hombold(A,X) & \Hombold(B,X)\ar[l]_-{(*)}
  \ar@/^0.5pc/[dl]^-{(\#)}\\
  \Hombold(B',X)\ar[u]^-{(**)}
}
\end{align*}
the right-hand commutative diagram. Since $p$ is a weak equivalence between cofibrant objects and $X$ is fibrant, we know that $(\#)$ is a weak equivalence, hence $(*)$ is a weak equivalence if and only if $(**)$ is a weak equivalence. Since $i$ is a strong cofibration, by construction, this completes the proof.
\end{proof}

\begin{prop}
\label{prop:TQ_acyclic_TQ_fibrations}
Consider any map $\function{f}{X}{Y}$ of $\capO$-algebras. Then the following are equivalent:
\begin{itemize}
\item[(i)] $f$ is a weak $\TQ^\capA$-fibration and $\TQ^\capA$-equivalence
\item[(ii)] $f$ is a $\TQ^\capA$-fibration and $\TQ^\capA$-equivalence
\item[(iii)] $f$ is a fibration and weak equivalence
\end{itemize}
\end{prop}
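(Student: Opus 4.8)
The plan is to establish the three-way equivalence by proving the cyclic chain of implications (iii) $\Rightarrow$ (ii) $\Rightarrow$ (i) $\Rightarrow$ (iii), two of which are essentially formal. For (ii) $\Rightarrow$ (i), I would simply invoke Proposition \ref{prop:relations_between_classes_of_maps}: every $\TQ^\capA$-fibration is a weak $\TQ^\capA$-fibration, so (ii) immediately yields (i). For (iii) $\Rightarrow$ (ii), note that a map which is both a fibration and a weak equivalence is an acyclic fibration in $\AlgO$, hence has the right lifting property with respect to \emph{every} cofibration; in particular it lifts against every cofibration that is a $\TQ^\capA$-equivalence, so it is a $\TQ^\capA$-fibration, and since weak equivalences are $\TQ^\capA$-equivalences (Proposition \ref{prop:relations_between_classes_of_maps}) it is a $\TQ^\capA$-equivalence as well.

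The substance lies in (i) $\Rightarrow$ (iii). Assume $f$ is a weak $\TQ^\capA$-fibration and a $\TQ^\capA$-equivalence. By Proposition \ref{prop:relations_between_classes_of_maps} the map $f$ is already a fibration, so only the weak equivalence assertion remains, and I would produce it by a retract argument. Factor $f$ in $\AlgO$ as a cofibration $i$ followed by an acyclic fibration $p$. Since $p$ is a weak equivalence it is a $\TQ^\capA$-equivalence, and since $f$ is a $\TQ^\capA$-equivalence, two-out-of-three for $\TQ^\capA$-equivalences forces $i$ to be a $\TQ^\capA$-equivalence as well. If $i$ were a \emph{strong} $\TQ^\capA$-acyclic cofibration, then the defining lifting property of the weak $\TQ^\capA$-fibration $f$ would supply a lift in the commutative square with left edge $i$, right edge $f$, top edge the identity, and bottom edge $p$; such a lift exhibits $f$ as a retract of the acyclic fibration $p$, and since weak equivalences are closed under retracts we would conclude that $f$ is a weak equivalence, establishing (iii).

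The main obstacle is exactly the cofibrancy condition hidden in ``if $i$ were a strong cofibration'': weak $\TQ^\capA$-fibrations are only guaranteed to lift against strong cofibrations, that is, cofibrations \emph{between cofibrant objects}, whereas $X$ need not be cofibrant, so the factorization $X \xrightarrow{i} Z \xrightarrow{p} Y$ need not have cofibrant domain. This is precisely the point where the failure of left properness would ordinarily intervene, and where I would instead exploit the Goerss--Hopkins strong cofibration observation. To circumvent it, I would first precompose with a cofibrant replacement: choose an acyclic fibration $c \colon \tilde X \xrightarrow{\simeq} X$ with $\tilde X$ cofibrant, and run the argument above on $f \circ c$ in place of $f$. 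The composite $f \circ c$ is still a $\TQ^\capA$-equivalence, and it is still a weak $\TQ^\capA$-fibration: since $c$ is an acyclic fibration it lifts against all cofibrations, hence against all strong $\TQ^\capA$-acyclic cofibrations, so $c$ is a weak $\TQ^\capA$-fibration, and the class of weak $\TQ^\capA$-fibrations, being defined by a right lifting property, is closed under composition. Now the factorization $\tilde X \xrightarrow{i} Z \xrightarrow{p} Y$ does have cofibrant domain, so $i$ is genuinely a strong $\TQ^\capA$-acyclic cofibration, the retract argument applies verbatim, and $f \circ c$ is a weak equivalence; since $c$ is a weak equivalence, two-out-of-three then gives that $f$ itself is a weak equivalence, which together with the fibration property yields (iii). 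The only routine point to confirm along the way is two-out-of-three for $\TQ^\capA$-equivalences, which is immediate from the functoriality of $\TQ^\capA$ on the homotopy category together with two-out-of-three for weak equivalences in the target.
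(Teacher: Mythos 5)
Your proof is correct, and the two easy implications (iii)$\Rightarrow$(ii)$\Rightarrow$(i) are exactly the paper's. For the substantive implication (i)$\Rightarrow$(iii) you share the paper's engine: factor a map with cofibrant domain as a cofibration followed by an acyclic fibration, note by two-out-of-three for $\TQ^\capA$-equivalences that the cofibration is a $\TQ^\capA$-acyclic strong cofibration, lift it against the weak $\TQ^\capA$-fibration, and conclude by the retract argument. The difference is in how cofibrancy of the domain is arranged. The paper proves outright that $f$ is an acyclic fibration: since the generating cofibrations have cofibrant domains, it suffices to solve an arbitrary lifting problem of a strong cofibration $i\colon A\rightarrow B$ against $f$, and the paper factors the attaching map $g\colon A\rightarrow X$ as a cofibration followed by an acyclic fibration $A\rightarrow\tilde{X}\xrightarrow{g''}X$, then runs the retract argument on the composite $f'=fg''$ --- a cofibrant replacement depending on each individual lifting problem --- the desired lift for $f$ being assembled from the acyclic-fibration property of $f'$. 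You instead take the fibration property of $f$ directly from Proposition \ref{prop:relations_between_classes_of_maps}, perform a single global cofibrant replacement $c\colon\tilde{X}\xrightarrow{\simeq}X$, prove that $fc$ is a weak equivalence by the retract argument, and recover that $f$ is a weak equivalence by two-out-of-three. Both reductions rest on the same two observations, which the paper invokes in the sentence ``it follows easily that $f':=fg''$ is a weak $\TQ^\capA$-fibration and $\TQ^\capA$-equivalence with cofibrant domain'' and which you spell out explicitly: an acyclic fibration lifts against all cofibrations, hence is a weak $\TQ^\capA$-fibration, and right-lifting-property classes are closed under composition. Your route is slightly more economical (one factorization, and no need to produce lifts against arbitrary strong cofibrations), while the paper's route delivers the conclusion directly in its lifting form; since a map is an acyclic fibration if and only if it is both a fibration and a weak equivalence, the two conclusions are formally equivalent, and later uses of the proposition (e.g., in Propositions \ref{prop:pullback_corner_map} and \ref{prop:right_lifting_property_characterization_set_of_maps}) are equally well served by either.
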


\begin{proof}
We know that $\text{(iii)}\Rightarrow\text{(ii)}$ because weak equivalences are $\TQ^\capA$-equivalences (Proposition \ref{prop:relations_between_classes_of_maps}) and acyclic fibrations have the right lifting property with respect to cofibrations. Note that  $\text{(ii)}\Rightarrow\text{(i)}$ by Proposition \ref{prop:relations_between_classes_of_maps}, hence it suffices to verify the implication $\text{(i)}\Rightarrow\text{(iii)}$. Suppose $f$ is a weak $\TQ^\capA$-fibration and $\TQ^\capA$-equivalence; let's verify that $f$ is an acyclic fibration. Since every generating cofibration for $\AlgO$ is a strong cofibration, it suffices to verify that $f$ has the right lifting property with respect to strong cofibrations. Let $\function{i}{A}{B}$ be a strong cofibration.  We want to verify that the left-hand solid commutative diagram 
\begin{align*}
\xymatrix{
  A\ar[r]^-{g}\ar[d]_-{i} & X\ar[d]^-{f}\\
  B\ar[r]_-{h} & Y
}\quad\quad
\xymatrix{
  A\ar[r]^-{g'}\ar[d]_-{i} & \tilde{X}\ar[r]^-{g''}\ar[dr]_(0.4){f'} & X\ar[d]^-{f}\\
  B\ar[rr]_-{h}\ar@{.>}[ur]^(0.45){\xi} && Y
}
\end{align*}
in $\AlgO$ has a lift. We factor $g$ as a cofibration followed by an acyclic fibration   
$
  A\xrightarrow{g'}\tilde{X}\xrightarrow{g''} X
$
in $\AlgO$. It follows easily that the composite $f':=fg''$ is a weak $\TQ^\capA$-fibration and $\TQ^\capA$-equivalence with cofibrant domain. To verify that the desired lift $\xi$ exists, it is enough to check that $f'$ is an acyclic fibration.

We factor $f'$ as a cofibration followed by an acyclic fibration
$
  \tilde{X}\xrightarrow{j}\tilde{Y}\xrightarrow{p} Y
$
in $\AlgO$, and since $f',p$ are $\TQ^\capA$-equivalences, it follows that $j$ is a $\TQ^\capA$-equivalence. Hence $j$ is a $\TQ^\capA$-acyclic  strong cofibration and the left-hand solid commutative diagram
\begin{align}
\label{eq:retract_argument_TQ}
\xymatrix{
  \tilde{X}\ar@{=}[r]\ar[d]_-{j} & \tilde{X}\ar[d]^-{f'}\\
  \tilde{Y}\ar[r]_-{p}\ar@{.>}[ur]^-{\eta} & Y
}\quad\quad
\xymatrix{
  \tilde{X}\ar[d]_-{f'}\ar[r]^-{j} & \tilde{Y}\ar[d]_-{p}\ar@{.>}[r]^-{\eta} & \tilde{X}\ar[d]_-{f'}\\
  Y\ar@{=}[r] & Y\ar@{=}[r] & Y
}
\end{align}
has a lift $\eta$. It follows that the right-hand diagram commutes with upper horizontal composite the identity map; in particular, $f'$ is a retract of $p$. Therefore $f'$ is an acyclic fibration which completes the proof. 
\end{proof}

The following is proved, for instance, in \cite[7.6]{Ching_Harper_derived_Koszul_duality}.

\begin{prop}
\label{prop:tensordot_commutes_with_Q}
If $A$ is an $\capO$-algebra and $K\in\sSet$, then there are isomorphisms $Q(A\tensordot K)\Iso Q(A)\tensordot K$ in $\Mod_\capA$, natural in $A,K$.
\end{prop}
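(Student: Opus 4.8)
The plan is to leverage the single structural fact that $Q=\alpha_*f_*$ is a composite of left adjoints (from the adjunctions in \eqref{eq:basic_adjunctions}), hence itself a left adjoint, and to match this against the fact that the tensoring $(-)\tensordot K$ is determined by the simplicial enrichment. The first step I would record is the elementary but crucial observation that the right adjoint $U=f^*\alpha^*$ commutes with cotensors: since $f^*$ and $\alpha^*$ are forgetful functors that leave the underlying $\capR$-module unchanged, and the cotensor $Z^K$ is computed on underlying objects with induced structure, one has a natural isomorphism $U(Z^K)\Iso(UZ)^K$ in $\AlgO$, for $Z\in\Mod_\capA$ and $K\in\sSet$.

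Second, I would use this to upgrade the adjunction $(Q,U)$ of \eqref{eq:TQ_with_coefficients_adjunction} to a simplicial adjunction. Using the levelwise description $\Hombold(A,Y)_n=\hom(A,Y^{\Delta[n]})$ of the mapping spaces together with the previous step, the chain $\hom_{\AlgO}(A,(UZ)^{\Delta[n]})\cong\hom_{\AlgO}(A,U(Z^{\Delta[n]}))\cong\hom_{\Mod_\capA}(QA,Z^{\Delta[n]})$, whose last isomorphism is the original adjunction, assembles into a natural isomorphism
\begin{align*}
  \Hombold_{\AlgO}(A, UZ)\Iso\Hombold_{\Mod_\capA}(QA, Z).
\end{align*}

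Finally, the desired isomorphism follows by a Yoneda argument built from the defining (tensor, hom) adjunctions in each category. For any $Z\in\Mod_\capA$,
\begin{align*}
  \hom_{\Mod_\capA}\bigl(Q(A\tensordot K), Z\bigr)
  &\Iso \hom_{\AlgO}(A \tensordot K, UZ)\\
  &\Iso \hom_{\sSet}\bigl(K, \Hombold_{\AlgO}(A, UZ)\bigr)\\
  &\Iso \hom_{\sSet}\bigl(K, \Hombold_{\Mod_\capA}(QA, Z)\bigr)\\
  &\Iso \hom_{\Mod_\capA}(QA \tensordot K, Z),
\end{align*}
and since all of these isomorphisms are natural in $Z$ (and in $A$, $K$), the Yoneda lemma yields $Q(A\tensordot K)\Iso QA\tensordot K$, natural in $A$ and $K$.

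The main obstacle is really only the first step: pinning down that the forgetful functors preserve cotensors, equivalently that $(Q,U)$ is a simplicial adjunction. Once the enrichment is respected, everything else is formal. I would also note the alternative, more hands-on route that bypasses the enrichment entirely: write $A\tensordot K$ as the colimit (coend over the simplex category of $K$) of the copowers $\coprod_{K_n}A$, and use that the left adjoint $Q$ preserves these coproducts and the defining coequalizers; since $Q$ carries $\coprod_{K_n}A$ to $\coprod_{K_n}QA$, this recovers $QA\tensordot K$ directly from colimit preservation.
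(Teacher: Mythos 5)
Your main three-step argument is correct and complete, but note that the paper itself offers no proof of this proposition at all: it simply cites Ching--Harper [7.6]. So your self-contained argument is necessarily a different route, and it is the purely formal, enriched-category one: cotensors in $\AlgO$, $\Mod_{\capO[1]}$, and $\Mod_\capA$ are all created on underlying $\capR$-modules (as $\Map(K_+,-)$ with induced structure), so the restriction functors $f^*,\alpha^*$, hence $U$, preserve cotensors; a cotensor-preserving right adjoint upgrades $(Q,U)$ to a simplicial adjunction; and then the two tensor-hom adjunctions plus Yoneda force $Q(A\tensordot K)\Iso Q(A)\tensordot K$. What this buys is generality and economy: the argument works verbatim for any adjunction between $\sSet$-tensored-and-cotensored categories whose right adjoint preserves cotensors, and it never touches the explicit construction of $\tensordot$ on $\AlgO$. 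The direct proof natural in this setting (and the style of argument one finds in the cited source) instead uses that construction explicitly: $A\tensordot K$ is the reflexive coequalizer of $\capO\circ\bigl((\capO\circ(A))\wedge K_+\bigr)\rightrightarrows\capO\circ(A\wedge K_+)$, the functor $Q$ sends free algebras to free modules, $Q(\capO\circ(M))\Iso\capA\wedge M$, and both $Q$ and $-\wedge K_+$ preserve reflexive coequalizers, which identifies both sides concretely. The trade-off is exactly that: your route replaces knowledge of the construction of $\tensordot$ by knowledge of how the enrichment is created on underlying objects.

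One correction to your closing aside, which, as stated, does not work: $A\tensordot K$ is \emph{not} the colimit of the copowers $\coprod_{K_n}A$, and ``$Q$ preserves coproducts and coequalizers'' is circular as a shortcut. The naive colimit over $\Delta^{\op}$ of the simplicial object $[n]\mapsto\coprod_{K_n}A$ is just the coequalizer of the two face maps, i.e.\ $\coprod_{\pi_0 K}A$, which is wrong; the correct formula is the realization coend $\int^{[n]\in\Delta}\bigl(\coprod_{K_n}A\bigr)\tensordot\Delta[n]$, which re-involves tensors with the simplices $\Delta[n]$ --- precisely the thing you are trying to show $Q$ preserves. A non-circular colimit-theoretic proof must use a presentation in which the simplicial set enters only through the underlying $\Mod_\capR$-tensor $\wedge K_+$, such as the free-algebra coequalizer presentation above. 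This does not affect your main proof, which stands on its own.
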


\begin{prop}
\label{prop:TQ_acyclic_strong_cofibrations_play_nicely_with_tensordot}
If $\function{j}{A}{B}$ is a strong cofibration of $\capO$-algebras and $\function{i}{K}{L}$ is a cofibration in $\sSet$, then the pushout corner map
\begin{align*}
  A\tensordot L\amalg_{A\tensordot K}B\tensordot K
  \rightarrow B\tensordot L
\end{align*}
in $\AlgO$ is a strong cofibration that is a $\TQ^\capA$-equivalence if $j$ is a $\TQ^\capA$-equivalence.
\end{prop}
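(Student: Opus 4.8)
The plan is to prove the pushout-product (corner map) statement in two parts, corresponding to its two assertions: first that the corner map is always a strong cofibration, and second that it is additionally a $\TQ^\capA$-equivalence under the stated hypothesis. The backbone of the first part is the standard pushout-product axiom for the simplicial (tensoring) structure on $\AlgO$ together with the $\tensordot$ action of $\sSet$. Since $j\colon A\to B$ is a strong cofibration (a cofibration between cofibrant objects) and $i\colon K\to L$ is a cofibration in $\sSet$, the simplicial model category axioms for $\AlgO$ guarantee that the pushout corner map is a cofibration in $\AlgO$; to upgrade ``cofibration'' to ``strong cofibration'' I need to check that both its source and target are cofibrant. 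The target $B\tensordot L$ is cofibrant because $B$ is cofibrant and $L$ is an arbitrary (cofibrant) object of $\sSet$, and the source is a pushout of cofibrant objects along a cofibration, hence cofibrant; so the corner map is a strong cofibration.

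For the second part, I would assume $j$ is a $\TQ^\capA$-equivalence and show the corner map is too. The key tool is Proposition \ref{prop:tensordot_commutes_with_Q}, which gives natural isomorphisms $Q(A\tensordot K)\iso Q(A)\tensordot K$ in $\Mod_\capA$. Since $\TQ^\capA(X)\wequiv Q(\tilde X)$ is computed by applying $Q$ to a cofibrant replacement, and since all four objects $A,B,K,L$ in sight are already cofibrant (so that $A\tensordot K$, etc., are cofibrant and hence compute $\TQ^\capA$ on the nose up to the natural weak equivalence), I can compute $\TQ^\capA$ of the entire pushout square by applying $Q$ objectwise. Because $Q=\alpha_*f_*$ is a left adjoint, it preserves pushouts, so $Q$ carries the pushout $A\tensordot L\amalg_{A\tensordot K}B\tensordot K$ to the corresponding pushout in $\Mod_\capA$. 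Using the natural isomorphism of Proposition \ref{prop:tensordot_commutes_with_Q}, this identifies $\TQ^\capA$ of the corner map, up to weak equivalence, with the corner map
\begin{align*}
  Q(A)\tensordot L\amalg_{Q(A)\tensordot K}Q(B)\tensordot K
  \rightarrow Q(B)\tensordot L
\end{align*}
associated to $Q(j)\colon Q(A)\to Q(B)$ and $i\colon K\to L$ in $\Mod_\capA$.

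Now the hypothesis that $j$ is a $\TQ^\capA$-equivalence says precisely that $Q(j)$ is a weak equivalence in $\Mod_\capA$. Since $\Mod_\capA$ is itself a (stable, and in particular left proper) simplicial model category, the pushout-product axiom there, combined with the gluing/cube lemma for left proper model categories, shows that the corner map built from a weak equivalence $Q(j)$ and a cofibration $i$ is again a weak equivalence. Therefore $\TQ^\capA$ of the corner map is a weak equivalence, i.e. the corner map is a $\TQ^\capA$-equivalence, completing the proof.

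\emph{The main obstacle} I anticipate is the bookkeeping needed to justify that applying $Q$ objectwise to the pushout square actually computes $\TQ^\capA$ up to weak equivalence---that is, verifying that every object appearing (the three corners of the pushout and its colimit) is cofibrant, so that $Q$ agrees with $\LL Q$ and no derived-functor correction is needed, and that $Q$ preserving the pushout is compatible with the weak equivalence $\TQ^\capA(-)\wequiv Q(\widetilde{(-)})$ naturally. The cofibrancy of the pushout corner is exactly what the first part establishes, so the two halves of the argument reinforce each other; the delicate point is ensuring the commutation of $Q$ with both $\tensordot$ and pushouts is applied to genuinely cofibrant inputs, after which the left-properness of $\Mod_\capA$ makes the final weak-equivalence conclusion routine.
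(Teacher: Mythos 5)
Your argument follows essentially the same route as the paper's proof: the strong-cofibration claim comes from the simplicial model structure on $\AlgO$, and the $\TQ^\capA$-equivalence claim is reduced to a statement in $\Mod_\capA$ by using that $Q$ preserves pushouts (being a left adjoint) together with Proposition \ref{prop:tensordot_commutes_with_Q}, which identifies $Q$ of the corner map with the corner map built from $Q(j)$ and $i$; and since every object in sight is cofibrant, $Q$ computes $\TQ^\capA$ on the nose, exactly as you say. The one place you diverge is the final step, and it is also the one place your justification is faulty. The paper notes that $Q(j)$ is not merely a weak equivalence but a cofibration between cofibrant objects in $\Mod_\capA$ (because $Q$ is left Quillen and $j$ is a strong cofibration), hence an acyclic cofibration; the pushout-product axiom in the simplicial model category $\Mod_\capA$ then yields immediately that the corner map there is an acyclic cofibration, with no properness or gluing input whatsoever. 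You instead treat $Q(j)$ only as a weak equivalence and invoke the gluing/cube lemma ``for left proper model categories,'' asserting that $\Mod_\capA$ is ``stable, and in particular left proper.'' That parenthetical implication is a non sequitur: stability does not imply left properness, and avoiding left-properness hypotheses is precisely the theme of this paper. Your conclusion survives, however, for a reason you have already set up: all corners of the two pushout squares being compared are cofibrant and the relevant legs ($Q(A)\tensordot K\rightarrow Q(A)\tensordot L$ and $Q(B)\tensordot K\rightarrow Q(B)\tensordot L$) are cofibrations, so the cube lemma in the form of Proposition \ref{prop:certain_pushouts_are_homotopy_pushouts}(b) applies with no properness hypothesis at all. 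So either repair works: cite Proposition \ref{prop:certain_pushouts_are_homotopy_pushouts}(b) in place of left properness, or, more efficiently, observe that $Q(j)$ is an acyclic cofibration and quote the pushout-product axiom, as the paper does.
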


\begin{proof}
We know that the pushout corner map is a strong cofibration by the simplicial model structure on $\AlgO$ (see, for instance, \cite{Harper_Hess}), hence it suffices to verify that $Q$ applied to this map is a weak equivalence. Since $Q$ is a left Quillen functor, it follows  that the pushout corner map
\begin{align*}
  Q(A)\tensordot L\amalg_{Q(A)\tensordot K}Q(B)\tensordot K
  \rightarrow Q(B)\tensordot L
\end{align*}
is a cofibration that is a weak equivalence if $Q(A)\rightarrow Q(B)$ is a weak equivalence, and Proposition \ref{prop:tensordot_commutes_with_Q} completes the proof.
\end{proof}

\begin{prop}
\label{prop:pullback_corner_map}
If $\function{j}{A}{B}$ is a strong cofibration and $\function{p}{X}{Y}$ is a weak $\TQ^\capA$-fibration of $\capO$-algebras, then the pullback corner map
\begin{align}
\label{eq:pullback_corner_map_sSet}
  \Hombold(B,X)\rightarrow\Hombold(A,X)\times_{\Hombold(A,Y)}\Hombold(B,Y)
\end{align}
in $\sSet$ is a fibration that is an acyclic fibration if either $j$ or $p$  is a $\TQ^\capA$-equivalence.
\end{prop}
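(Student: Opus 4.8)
The plan is to reduce the entire statement to a right lifting property and then feed in the three preceding propositions. Recall that $\AlgO$ is a simplicial model category, so for any cofibration $\function{i}{K}{L}$ in $\sSet$, the tensor--hom adjunction identifies a lifting problem of the pullback corner map \eqref{eq:pullback_corner_map_sSet} against $i$ with a lifting problem of $p$ against the pushout corner map
\[
  A\tensordot L\amalg_{A\tensordot K}B\tensordot K \rightarrow B\tensordot L .
\]
Concretely, \eqref{eq:pullback_corner_map_sSet} has the right lifting property with respect to $i$ if and only if $p$ has the right lifting property with respect to this pushout corner map. First I would set up this adjoint correspondence carefully, since it is the scaffolding on which everything else hangs.

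Next I would verify that \eqref{eq:pullback_corner_map_sSet} is a fibration, i.e.\ that it has the right lifting property with respect to every acyclic cofibration $\function{i}{K}{L}$ in $\sSet$. For such $i$, the simplicial model structure on $\AlgO$ makes the pushout corner map an acyclic cofibration; since $A,B$ are cofibrant it is in fact a strong cofibration, and being a weak equivalence it is a $\TQ^\capA$-equivalence by Proposition \ref{prop:relations_between_classes_of_maps}. Hence it is a $\TQ^\capA$-acyclic strong cofibration, so the weak $\TQ^\capA$-fibration $p$ lifts against it by the very definition of weak $\TQ^\capA$-fibration, and the adjunction supplies the desired lift for \eqref{eq:pullback_corner_map_sSet}.

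For the acyclic fibration claim I would instead check the right lifting property with respect to \emph{every} cofibration $\function{i}{K}{L}$ in $\sSet$, splitting into the two hypotheses. If $j$ is a $\TQ^\capA$-equivalence, then Proposition \ref{prop:TQ_acyclic_strong_cofibrations_play_nicely_with_tensordot} shows the pushout corner map is a $\TQ^\capA$-acyclic strong cofibration for every cofibration $i$, and again $p$, being a weak $\TQ^\capA$-fibration, lifts against it. If instead $p$ is a $\TQ^\capA$-equivalence, then $p$ is a weak $\TQ^\capA$-fibration and $\TQ^\capA$-equivalence, so by Proposition \ref{prop:TQ_acyclic_TQ_fibrations} (i)$\Rightarrow$(iii) it is an acyclic fibration in $\AlgO$; since the pushout corner map is a (strong) cofibration, $p$ lifts against it. In either case the adjunction returns a lift of \eqref{eq:pullback_corner_map_sSet} against $i$, so \eqref{eq:pullback_corner_map_sSet} is an acyclic fibration.

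The hard part is not any single estimate but making the classes of maps line up so that the definition of weak $\TQ^\capA$-fibration can actually be invoked: one must know that the pushout corner maps are genuinely \emph{strong} cofibrations (which is exactly why the hypothesis that $j$ is a strong cofibration, hence that $A,B$ are cofibrant, is needed) and that they are $\TQ^\capA$-equivalences in the relevant cases. This is precisely the content that Propositions \ref{prop:TQ_acyclic_strong_cofibrations_play_nicely_with_tensordot} and \ref{prop:TQ_acyclic_TQ_fibrations} are engineered to provide, so once the adjoint correspondence of the first paragraph is in place, the remainder is bookkeeping rather than new difficulty.
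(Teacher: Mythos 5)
Your proposal is correct and takes essentially the same route as the paper's proof: translate each lifting problem across the tensor--hom adjunction of the simplicial structure on $\AlgO$, then handle the cases by invoking Proposition \ref{prop:TQ_acyclic_strong_cofibrations_play_nicely_with_tensordot} (when $j$ is a $\TQ^\capA$-equivalence) and Proposition \ref{prop:TQ_acyclic_TQ_fibrations} (when $p$ is). The only cosmetic difference is in the fibration case: the paper lifts $p$, viewed as a fibration via Proposition \ref{prop:relations_between_classes_of_maps}, against the acyclic cofibration produced by the simplicial model structure, whereas you upgrade that pushout corner map to a $\TQ^\capA$-acyclic strong cofibration and lift using the definition of weak $\TQ^\capA$-fibration---both arguments lean on Proposition \ref{prop:relations_between_classes_of_maps} and are equally valid.
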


\begin{proof}
Suppose $j$ is a $\TQ^\capA$-acyclic strong cofibration and $p$ is a weak $\TQ^\capA$-fibration. Consider any cofibration $\function{i}{K}{L}$ in $\sSet$. We want to show that the pullback corner map \eqref{eq:pullback_corner_map_sSet} satisfies the right lifting property with respect to $i$.
\begin{align}
\label{eq:lifting_diagram_for_simplicial_structure_proof}
\xymatrix{
  K\ar[d]\ar[r] & \Hombold(B,X)\ar[d]\\
  L\ar[r]\ar@{.>}[ur] & \Hombold(A,X)\times_{\Hombold(A,Y)}\Hombold(B,Y)
}\quad\quad
\xymatrix{
  A\tensordot L \amalg_{A\tensordot K} B\tensordot K\ar[d]_-{(*)}\ar[r] & 
  X\ar[d]\\
  B\tensordot L\ar[r]\ar@{.>}[ur] & Y
}
\end{align}
The left-hand solid commutative diagram has a lift if and only if the corresponding right-hand solid commutative diagram has a lift. Noting that $(*)$ is a $\TQ^\capA$-acyclic strong cofibration (Proposition \ref{prop:TQ_acyclic_strong_cofibrations_play_nicely_with_tensordot}) completes the proof of this case. Suppose $j$ is a strong cofibration and $p$ is a weak $\TQ^\capA$-fibration. Consider any acyclic cofibration $\function{i}{K}{L}$ in $\sSet$. We want to show that the pullback corner map \eqref{eq:pullback_corner_map_sSet} satisfies the right lifting property with respect to $i$. The left-hand solid commutative diagram in \eqref{eq:lifting_diagram_for_simplicial_structure_proof} has a lift if and only if the corresponding right-hand solid commutative diagram has a lift. Noting that $p$ is a fibration (Proposition \ref{prop:relations_between_classes_of_maps}) and $(*)$ is an acyclic cofibration (see, for instance, \cite[Section 6]{Harper_Hess}) completes the proof of this case. The case where $j$ is a strong cofibration and $p$ is a $\TQ^\capA$-acyclic weak $\TQ^\capA$-fibration is similar; this is because $p$ is an acyclic fibration (Proposition \ref{prop:TQ_acyclic_TQ_fibrations}).
\end{proof}

\begin{prop}[Detecting $\TQ^\capA$-local $\capO$-algebras: Part 1]
\label{prop:detecting_TQ_local_O_algebras_part_1}
Let $X$ be a fibrant $\capO$-algebra. Then $X$ is $\TQ^\capA$-local if and only if $X\rightarrow *$ satisfies the right lifting property with respect to every $\TQ^\capA$-acyclic strong cofibration $A\rightarrow B$ of $\capO$-algebras (i.e., if and only if $X\rightarrow *$ is a weak $\TQ^\capA$-fibration).
\end{prop}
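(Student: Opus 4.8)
The plan is to unwind both sides into a statement about lifting against cofibrations in $\sSet$ and then match them up using the mapping-space machinery already assembled. The right lifting property of $X \rightarrow *$ against a $\TQ^\capA$-acyclic strong cofibration $\function{i}{A}{B}$ is, by the definition of weak $\TQ^\capA$-fibration (Definition \ref{defn:classes_of_maps_TQ_local_homotopy_theory}(iv)), precisely the assertion that $X \rightarrow *$ is a weak $\TQ^\capA$-fibration; so the ``only if'' direction is what carries content, and the equivalence with $\TQ^\capA$-locality is what must be proved.

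First I would record the simplicial reformulation. For a strong cofibration $\function{i}{A}{B}$, the induced map on mapping spaces
\begin{align*}
  \Hombold(B,X)\rightarrow\Hombold(A,X)
\end{align*}
is exactly the pullback corner map of Proposition \ref{prop:pullback_corner_map} in the special case $Y = *$, since then $\Hombold(A,Y)$ and $\Hombold(B,Y)$ are terminal and the fibered product collapses to $\Hombold(A,X)$. Thus the hypothesis that $X \rightarrow *$ is a weak $\TQ^\capA$-fibration feeds directly into Proposition \ref{prop:pullback_corner_map}: taking $j = i$ to be a $\TQ^\capA$-acyclic strong cofibration and $p = (X \rightarrow *)$ a weak $\TQ^\capA$-fibration, the corner map is an acyclic fibration in $\sSet$, hence a weak equivalence. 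This shows the induced map $\Hombold(B,X)\xrightarrow{\wequiv}\Hombold(A,X)$ is a weak equivalence for every $\TQ^\capA$-acyclic strong cofibration, which is condition (ii) in Definition \ref{defn:TQ_local_O_algebras}; together with the standing fibrancy hypothesis this gives that $X$ is $\TQ^\capA$-local.

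For the converse I would run the same correspondence backwards. Assuming $X$ is $\TQ^\capA$-local, condition (ii) of Definition \ref{defn:TQ_local_O_algebras} says that every $\TQ^\capA$-acyclic strong cofibration $\function{i}{A}{B}$ induces a weak equivalence \eqref{eq:induced_map_on_mapping_complexes} on mapping spaces; since the domains and codomains here are cofibrant $\capO$-algebras and $X$ is fibrant, this map $\Hombold(B,X)\rightarrow\Hombold(A,X)$ is in fact an acyclic fibration in $\sSet$ (it is already a fibration by Proposition \ref{prop:pullback_corner_map}, being the corner map for $Y = *$, and it is a weak equivalence by locality). Evaluating the right lifting property of this acyclic fibration against the generating cofibrations $\partial\Delta[n]\rightarrow\Delta[n]$ of $\sSet$ translates, via the simplicial adjunction isomorphism exhibited in the right-hand diagram of \eqref{eq:lifting_diagram_for_simplicial_structure_proof}, into the statement that $X \rightarrow *$ has the right lifting property with respect to the pushout corner maps built from $i$ and $\partial\Delta[n]\rightarrow\Delta[n]$, and in particular against $i$ itself (taking $K \rightarrow L$ to be $\emptyset \rightarrow \Delta[0]$). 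Hence $X \rightarrow *$ is a weak $\TQ^\capA$-fibration.

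The step I expect to require the most care is the bookkeeping in the second paragraph's use of Proposition \ref{prop:pullback_corner_map} with $Y = *$: one must confirm that the degenerate fibered product really does reduce the corner map to the map on mapping spaces out of $i$, and that ``acyclic fibration in $\sSet$'' is genuinely equivalent to the levelwise weak-equivalence-plus-lifting data demanded by $\TQ^\capA$-locality. This is a standard but slightly fussy identification, and it is the hinge on which both directions of the biconditional turn; everything else is a direct appeal to the simplicial structure and the previously established propositions.
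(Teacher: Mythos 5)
Your overall strategy is essentially the paper's own: both directions are run through the mapping-space adjunction, with one direction reduced to level-zero surjectivity of an acyclic fibration and the other to a corner-map computation. Your ``if'' direction (right lifting property implies local) is correct as written: with $Y=*$ the fibered product in Proposition \ref{prop:pullback_corner_map} collapses to $\Hombold(A,X)$, and since $X\rightarrow{*}$ is \emph{assumed} there to be a weak $\TQ^\capA$-fibration, that proposition applies verbatim and yields the acyclic fibration, hence the weak equivalence required by Definition \ref{defn:TQ_local_O_algebras}. This is a legitimate repackaging of what the paper does by hand: the paper instead checks the right lifting property of \eqref{eq:induced_map_on_mapping_complexes} against the maps $\partial\Delta[n]\rightarrow\Delta[n]$ directly, using Proposition \ref{prop:TQ_acyclic_strong_cofibrations_play_nicely_with_tensordot} and the tensor/mapping-space adjunction; your route and the paper's differ only in whether that argument is quoted (via Proposition \ref{prop:pullback_corner_map}) or inlined.

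The gap is in your ``only if'' direction (local implies right lifting property). There you assert that $\Hombold(B,X)\rightarrow\Hombold(A,X)$ ``is already a fibration by Proposition \ref{prop:pullback_corner_map}, being the corner map for $Y=*$.'' But Proposition \ref{prop:pullback_corner_map} has as a hypothesis that $\function{p}{X}{Y}$ is a weak $\TQ^\capA$-fibration; with $Y=*$ that hypothesis is exactly the conclusion you are trying to establish in this direction, so the citation is circular. (Knowing only that $X$ is fibrant in $\AlgO$ does not give this: by Proposition \ref{prop:relations_between_classes_of_maps} the implication runs from weak $\TQ^\capA$-fibration to fibration, not conversely.) The fact you need is true, but its correct source is the simplicial model structure on $\AlgO$ (axiom SM7; see \cite{Harper_Hess}, as invoked in the proof of Proposition \ref{prop:TQ_acyclic_strong_cofibrations_play_nicely_with_tensordot}): since $i$ is a cofibration and $X$ is fibrant, the map $\Hombold(B,X)\rightarrow\Hombold(A,X)$ is a fibration in $\sSet$, and locality supplies the weak equivalence. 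This is precisely what the paper uses, without comment, when it asserts that \eqref{eq:induced_map_on_mapping_complexes} is an acyclic fibration. With that one repair your argument closes up exactly as in the paper: level-zero surjectivity of the acyclic fibration (equivalently, its right lifting property against $\partial\Delta[0]=\emptyset\rightarrow\Delta[0]$, which transposes under the adjunction to lifting $i$ against $X\rightarrow{*}$) produces the desired lift.
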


\begin{proof}
Suppose $X$ is $\TQ^\capA$-local and let $\function{i}{A}{B}$ be a $\TQ^\capA$-acyclic strong cofibration. Let's verify that $X\rightarrow *$ satisfies the right lifting property with respect to $i$. We know that the induced map of simplicial sets \eqref{eq:induced_map_on_mapping_complexes} is an acyclic fibration, hence evaluating the induced map \eqref{eq:induced_map_on_mapping_complexes} at level 0 gives a surjection
\begin{align*}
 \hom(A,X)\leftarrow\hom(B,X)
\end{align*}
of sets, which verifies the desired lift exists. Conversely, consider any $\TQ^\capA$-acyclic strong cofibration $A\rightarrow B$ of $\capO$-algebras. Let's verify that the induced map \eqref{eq:induced_map_on_mapping_complexes} is an acyclic fibration. It suffices to verify the right lifting property with respect to any generating cofibration $\partial\Delta[n]\rightarrow\Delta[n]$ in $\sSet$. Consider any left-hand solid commutative diagram of the form
\begin{align*}
\xymatrix{
  \partial\Delta[n]\ar[d]\ar[r] & \Hombold(B,X)\ar[d]\\
  \Delta[n]\ar[r]\ar@{.>}[ur] & \Hombold(A,X)
}\quad\quad
\xymatrix{
  A\tensordot\Delta[n]\coprod_{A\tensordot\partial\Delta[n]} B\tensordot\partial\Delta[n]\ar[d]^-{(*)}\ar[r] & X\ar[d]\\
  B\tensordot\Delta[n]\ar[r]\ar@{.>}[ur] & {*}
}
\end{align*}
in $\sSet$. Then the left-hand lift exists in $\sSet$ if and only if the corresponding right-hand lift exists in $\AlgO$. The map $(*)$ is a $\TQ^\capA$-acyclic strong cofibration by Proposition \ref{prop:TQ_acyclic_strong_cofibrations_play_nicely_with_tensordot}, hence, by assumption, the lift in the right-hand diagram exists, which completes the proof.
\end{proof}

\begin{rem}
\label{rem:dropping_fibrancy_assumption}
Since the generating acyclic cofibrations in $\AlgO$ have cofibrant domains, the fibrancy assumption on $X$ in Proposition \ref{prop:detecting_TQ_local_O_algebras_part_1} could be dropped; we keep it in, however, to motivate later closely related statements (Propositions \ref{prop:detecting_TQ_local_O_algebras_part_2} and \ref{prop:detecting_TQ_local_O_algebras_part_3}).
\end{rem}

\section{Cell $\capO$-algebras and the subcell lifting property}

Suppose we start with an $\capO$-algebra $A$. It may not be cofibrant, so we can run the small object argument with respect to the set of generating cofibrations in $\AlgO$ for the map $*\rightarrow A$. This gives a factorization in $\AlgO$ as $*\rightarrow \tilde{A}\rightarrow A$ a cofibration followed by an acyclic fibration. In particular, this construction builds $\tilde{A}$ by attaching cells; we would like to think of $\tilde{A}$ as a ``cell $\capO$-algebra'', and we will want to work with a useful notion of ``subcell $\capO$-algebra'' obtained by only attaching a subset of the cells above. Since every $\capO$-algebra can be replaced by such a cell $\capO$-algebra, up to weak equivalence, the idea is that this should provide a convenient class of $\capO$-algebras to reduce to when constructing the $\TQ^\capA$-localization functor; this reduction strategy---to work with cellular objects---is one of the main themes in Hirschhorn \cite{Hirschhorn}, and it plays a key role in this paper. The first step is to recall the generating cofibrations for $\AlgO$ and to make these cellular ideas more precise in the particular context of $\capO$-algebras needed for this paper.

Recall from \cite[7.10]{Harper_Hess} that the generating cofibrations for the positive flat stable model structure on $\capR$-modules is given by the set of maps of the form
\begin{align*}
\xymatrix{ 
  \capR\tensor G^H_m \partial\Delta[k]_+\ar[r]^-{i_m^{H,k}}&
  \capR\tensor G^H_m\Delta[k]_+
}\quad\quad
  (m\geq 1,\ k\geq 0,\ H\subset \Sigma_m \ \text{subgroup})
\end{align*} 
in $\capR$-modules. For ease of notational purposes, it will be convenient to denote this set of maps using the more concise notation
\begin{align*}
\xymatrix{ 
  S_m^{H,k}\ar[r]^-{i_m^{H,k}}&
  D_m^{H,k}
}\quad\quad
  (m\geq 1,\ k\geq 0,\ H\subset \Sigma_m \ \text{subgroup})
\end{align*}
where $S_m^{H,k}$ are $D_m^{H,k}$ are intended to remind the reader of ``sphere'' and ``disk'', respectively. In terms of this notation, recall from \cite[7.15]{Harper_Hess} that the generating cofibrations for the positive flat stable model structure on $\capO$-algebras is given by the set of maps of the form
\begin{align}
\label{eq:generating_cofibrations_for_O_algebras}
\xymatrix{ 
  \capO\circ(S_m^{H,k})\ar[rr]^-{\id\circ(i_m^{H,k})}&&
  \capO\circ(D_m^{H,k})
}\quad\quad
  (m\geq 1,\ k\geq 0,\ H\subset \Sigma_m \ \text{subgroup})
\end{align}
in $\capO$-algebras. 

Definitions \ref{defn:relative_cell_O_algebra}--\ref{defn:subcell_O_algebra} below appear in Hirschhorn \cite[10.5.8, 10.6]{Hirschhorn} in the more general context of cellular model categories; we have tailored the definitions to exactly what is needed for  this paper; i.e., in the context of $\capO$-algebras.

\begin{defn}
\label{defn:relative_cell_O_algebra}
A map $\function{\alpha}{W}{Z}$ in $\AlgO$ is a \emph{relative cell $\capO$-algebra} if it can be constructed as a transfinite composition of maps of the form
\begin{align*}
  W=Z_0\rightarrow Z_1\rightarrow Z_2\rightarrow \dots\rightarrow Z_\infty:=\colim_n Z_n\Iso Z
\end{align*}
such that each map $Z_n\rightarrow Z_{n+1}$ is built from a pushout diagram of the form
\begin{align}
\label{eq:presentation_of_relative_cell_O_algebra}
\xymatrix{
  \coprod_{i\in I_n}\capO\circ(S_{m_i}^{H_i,k_i})\ar[d]_-{\amalg_{i\in I_n}\id\circ(i_{m_i}^{H_i,k_i})}\ar[r]^-{(*)} & Z_n\ar[d]\\
  \coprod_{i\in I_n}\capO\circ(D_{m_i}^{H_i,k_i})\ar[r] & Z_{n+1}
}
\end{align}
in $\AlgO$, for each $n\geq 0$. A choice of such a transfinite composition of pushouts is a \emph{presentation} of $\function{\alpha}{W}{Z}$ as a relative cell $\capO$-algebra. With respect to such a presentation, the \emph{set of cells} in $\alpha$ is the set $\sqcup_{n\geq 0}I_n$ and the \emph{number of cells} in $\alpha$ is the cardinality of its set of cells; here, $\sqcup$ denotes disjoint union of sets.
\end{defn}

\begin{rem}
We often drop explicit mention of the choice of presentation of a relative cell $\capO$-algebra, for ease of reading purposes, when no confusion can result.
\end{rem}

\begin{defn}
An $\capO$-algebra $Z$ is a \emph{cell $\capO$-algebra} if $*\rightarrow Z$ is a relative cell $\capO$-algebra. The \emph{number of cells} in $Z$, denoted $\# Z$, is the number of cells in $*\rightarrow Z$ (with respect to a choice of presentation of $*\rightarrow Z$).
\end{defn}

\begin{defn}
\label{defn:subcell_O_algebra}
Let $Z$ be a cell $\capO$-algebra. A \emph{subcell $\capO$-algebra} of $Z$ is a cell $\capO$-algebra $Y$ built by a subset of cells in $Z$ (with respect to a choice of presentation of $*\rightarrow Z$). More precisely, $Y\subset Z$ is a subcell $\capO$-algebra if $*\rightarrow Y$ can be constructed as a transfinite composition of maps of the form
\begin{align*}
  *=Y_0\rightarrow Y_1\rightarrow Y_2\rightarrow \dots\rightarrow Y_\infty:=\colim_n Y_n\Iso Y
\end{align*}
such that each map $Y_n\rightarrow Y_{n+1}$ is built from a pushout diagram of the form
\begin{align*}
\xymatrix{
  \coprod_{j\in J_n}\capO\circ(S_{m_j}^{H_j,k_j})\ar[d]_-{\amalg_{j\in J_n}\id\circ(i_{m_j}^{H_j,k_j})}\ar[r]^-{(**)} & Y_n\ar[d]\\
  \coprod_{j\in J_n}\capO\circ(D_{m_j}^{H_j,k_j})\ar[r] & Y_{n+1}
}
\end{align*}
in $\AlgO$, where $J_n\subset I_n$ and the attaching map $(**)$ is the restriction of the corresponding attaching map $(*)$ in \eqref{eq:presentation_of_relative_cell_O_algebra} (taking $W=*$), for each $n\geq 0$.
\end{defn}

\begin{defn}
Let $Z$ be a cell $\capO$-algebra. A subcell $\capO$-algebra $Y\subset Z$ is \emph{finite} if $\# Y$ is finite (with respect to a choice of presentation of $*\rightarrow Z$); in this case we say that $Y$ has finitely many cells.
\end{defn}

\begin{rem}
Let $Z$ be a cell $\capO$-algebra. A subcell $\capO$-algebra $Y\subset Z$ can be described by giving a compatible collection of subsets $J_n\subset I_n$, $n\geq 0$, (with respect to a choice of presentation for $*\rightarrow Z$); here, \emph{compatible} means that the corresponding attaching maps are well-defined. It follows that the resulting subcell $\capO$-algebra inclusion $Y\subset Z$ can be constructed stage-by-stage
\begin{align*}
\xymatrix{
  {*}=Y_0\ar@{=}@<-3ex>[d]\ar[r] & Y_1\ar[d]\ar[r] & Y_2\ar[d]\ar[r] & \dots\ar[r] & Y_\infty\ar[d]\ar[r]^-{\Iso} & Y\ar[d]\\
  {*}=Z_0\ar[r] & Z_1\ar[r] & Z_2\ar[r] & \dots\ar[r] & Z_\infty\ar[r]^-{\Iso} & Z 
}
\end{align*}
as the indicated colimit.
\end{rem}

\begin{prop}
\label{prop:pushout_diagram_of_subcell_O_algebras}
Let $Z$ be a cell $\capO$-algebra. If $A\subset Z$ and $B\subset Z$ are subcell $\capO$-algebras, then there is a pushout diagram of the form
\begin{align}
\label{eq:pushout_diagram_of_subcell_O_algebras}
\xymatrix{
  A\cap B\ar[d]\ar[r] & A\ar[d]\\
  B\ar[r] & A\cup B
}
\end{align}
in $\AlgO$, which is also a pullback diagram, where the indicated arrows are subcell $\capO$-algebra inclusions.
\end{prop}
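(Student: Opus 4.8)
The plan is to reduce everything to the combinatorics of cells. Fix a presentation of $*\rightarrow Z$, so that $Z\Iso\colim_n Z_n$ with each $Z_n\rightarrow Z_{n+1}$ attaching the cells indexed by $I_n$, and recall that a subcell $\capO$-algebra is precisely the data of a compatible family of subsets $J_n\subset I_n$; write $A=\{J_n^A\}$ and $B=\{J_n^B\}$. First I would define $A\cap B$ and $A\cup B$ to be the subcell $\capO$-algebras determined by the families $\{J_n^A\cap J_n^B\}$ and $\{J_n^A\cup J_n^B\}$, and verify by induction on $n$ that these families are compatible, i.e., that the restricted attaching maps are well defined. The key point is that the attaching map of a cell $i\in J_n^A$ already factors through $A_n$ and, for $i\in J_n^A\cap J_n^B$, agrees with the corresponding factorization through $B_n$ (both being restrictions of the attaching map of $i$ in $Z$), so the common attaching map factors through $A_n\cap B_n=(A\cap B)_n$; the analogous statement for the union is similar. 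This produces, at the same time, the four subcell $\capO$-algebra inclusions appearing in \eqref{eq:pushout_diagram_of_subcell_O_algebras}.

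Next I would prove that the square is a pushout. The elementary input is that indexed coproducts turn set-level unions into pushouts: for subsets $S,T\subset I_n$ one has $S\cup T\Iso S\amalg_{S\cap T}T$ in $\Set$, hence
\[
\coprod_{i\in S\cup T}\capO\circ(D_{m_i}^{H_i,k_i})
\Iso
\Bigl(\coprod_{i\in S}\capO\circ(D_{m_i}^{H_i,k_i})\Bigr)
\amalg_{\coprod_{i\in S\cap T}\capO\circ(D_{m_i}^{H_i,k_i})}
\Bigl(\coprod_{i\in T}\capO\circ(D_{m_i}^{H_i,k_i})\Bigr)
\]
in $\AlgO$, and likewise with the ``disk'' cells $D$ replaced by the ``sphere'' cells $S$. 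Feeding these identities, together with the four cell-attachment pushouts defining $A_{n+1}$, $B_{n+1}$, $(A\cap B)_{n+1}$, $(A\cup B)_{n+1}$, into the pasting law for pushouts, I would show by induction on $n$ that the square with corners $(A\cap B)_n$, $A_n$, $B_n$, $(A\cup B)_n$ is a pushout in $\AlgO$; the inductive step is a cube-pasting argument that assembles the stage-$n$ pushout (the induction hypothesis) with the cell-attachment pushouts. Passing to the colimit over $n$ then yields that \eqref{eq:pushout_diagram_of_subcell_O_algebras} is a pushout, since pushouts are colimits and colimits commute with colimits.

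For the pullback assertion I would pass to underlying objects: limits in $\AlgO$ are created by the forgetful functor to $\ModR$, and the transfinite compositions presenting the subcell $\capO$-algebras are filtered colimits, also created by this functor, so that (filtered colimits commuting with finite limits in $\ModR$) it suffices to check that the stage-$n$ square is a pullback on underlying objects for each $n$. There it reduces to the statement that the cells common to the subcomplexes $A_n$ and $B_n$ of $Z_n$ are exactly those indexed by $J_m^A\cap J_m^B$, $m<n$. I expect this to be the main obstacle, since it is exactly the place where one must rule out any collision between a cell of $A$ and a cell of $B$ that is not already common to both. The way I would handle it is via the filtration of the underlying object of a pushout of $\capO$-algebras from \cite{Harper_Hess}, which exhibits the attachment of distinct cells as adding disjoint summands and shows each generating map $i_m^{H,k}$ to be a monomorphism in the positive flat stable sense; this cellular disjointness---the effective-monomorphism behavior built into Hirschhorn's \cite[10.6]{Hirschhorn} notion of subcomplex---guarantees that no spurious identifications occur and hence that the stage-$n$ square, and therefore \eqref{eq:pushout_diagram_of_subcell_O_algebras}, is a pullback.
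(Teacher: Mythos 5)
Your proposal is correct and takes essentially the same approach as the paper, whose proof is a citation of Hirschhorn \cite[12.2.2]{Hirschhorn} plus exactly this sketch: form $A\cap B$ and $A\cup B$ from the intersections and unions of the index sets of a fixed presentation, check compatibility by induction on stages, and use the fact that cofibrations of $\capO$-algebras are monomorphisms of underlying symmetric spectra, hence effective monomorphisms, to rule out spurious identifications. The one refinement to make: your compatibility step at stage $n+1$ already invokes the pullback property of the stage-$n$ square (to factor the common attaching map through $(A\cap B)_n$ rather than merely through $A_n$ and $B_n$ separately), so the compatibility, pushout, and pullback assertions should be established in a single simultaneous induction on $n$ rather than as three consecutive arguments.
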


\begin{proof}
This is proved in Hirschhorn \cite[12.2.2]{Hirschhorn} in a more general context, but here is the basic idea: Consider $*\rightarrow Z$ with presentation as in \eqref{eq:presentation_of_relative_cell_O_algebra} (taking $W=*$). Suppose that $S_n\subset I_n$ and $T_n\subset I_n$, $n\geq 0$, correspond to the subcell $\capO$-algebras $A\subset Z$ and $B\subset Z$, respectively. Then it follows (by induction on $n$) that $S_n\cap T_n\subset I_n$ and $S_n\cup T_n\subset I_n$, $n\geq 0$, are compatible collections of subsets and taking $A\cap B\subset Z$ and $A\cup B\subset Z$ to be the corresponding subcell $\capO$-algebras, respectively, completes the proof. Here, we are using the fact that every cofibration of $\capO$-algebras is, in particular, a monomorphism of underlying symmetric spectra, and hence an effective monomorphism \cite[12.2]{Hirschhorn} of $\capO$-algebras.
\end{proof}

The following is proved in \cite[I.2.4, I.2.5]{Chacholski_Scherer}.
\begin{prop}
\label{prop:certain_pushouts_are_homotopy_pushouts}
Let $\M$ be a model category (see, for instance, \cite[3.3]{Dwyer_Spalinski}).
\begin{itemize}
\item[(a)] Consider any pushout diagram of the form
\begin{align*}
\xymatrix{
  A\ar[d]_-{i}\ar[r]^-{f} & B\ar[d] \\
  C\ar[r]^-{g} & D
}
\end{align*}
in $\M$, where $A,B,C$ are cofibrant and $i$ is a cofibration. If $f$ is a weak equivalence, then $g$ is a weak equivalence.
\item[(b)] Consider any commutative diagram of the form
\begin{align*}
\xymatrix{
  A_0\ar[d]_-{\simeq} & 
  A_1\ar[d]_-{\simeq}\ar[r]\ar[l] & 
  A_2\ar[d]_-{\simeq}\\
  B_0 & 
  B_1\ar[r]\ar[l] & 
  B_2
}
\end{align*}
in $\M$, where $A_i,B_i$ are cofibrant for each $0\leq i\leq 2$, the vertical maps are weak equivalences, and $A_0\leftarrow A_1$ is a cofibration. If either $B_0\leftarrow B_1$ or $B_1\rightarrow B_2$ is a cofibration, then the induced map
\begin{align*}
  A_0\amalg_{A_1}A_2\xrightarrow{\wequiv}
  B_0\amalg_{B_1}B_2
\end{align*}
is a weak equivalence.
\end{itemize}
\end{prop}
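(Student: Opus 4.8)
The plan is to establish part (a) as the engine and then deduce the gluing lemma (b) from it. Throughout, the indispensable role of the cofibrancy hypotheses is that $\M$ is not assumed left proper (cf.\ Remark \ref{rem:left_properness}); cofibrancy of all the objects in sight is precisely what rescues the conclusion.

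For (a), I would first dispose of the easy case in which $f$ is itself a cofibration: then $f$ is an acyclic cofibration, and its cobase change $g$ is an acyclic cofibration by the pushout axiom for acyclic cofibrations, hence a weak equivalence. For general $f$, using functorial factorization I would write $f$ as $A\xrightarrow{j}T\xrightarrow{p}B$ with $j$ an acyclic cofibration and $p$ an acyclic fibration (the latter by $2$-out-of-$3$). Pushing $j$ out along $i$ yields an acyclic cofibration $C\to C\amalg_A T$ (pushout axiom), and the pasting lemma for pushouts identifies the residual map $C\amalg_A T\to D$ as the cobase change of the acyclic fibration $p$ along the cofibration $T\to C\amalg_A T$, now between cofibrant objects. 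By $2$-out-of-$3$ it therefore suffices to treat this residual map.

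This last step is the main obstacle, since the cobase change of a weak equivalence along a cofibration is not a weak equivalence in general---exactly the failure of left properness---so one must genuinely exploit cofibrancy. I would argue via a Ken Brown type construction: because $p\colon T\to B$ is an acyclic fibration and $B,T$ are cofibrant, $p$ admits a section $s$ with $ps=\id_B$ together with a homotopy $sp\simeq\id_T$ over $B$; transporting $s$ and this homotopy across the cofibration $T\to C\amalg_A T$ produces a homotopy inverse to the residual map. Making this precise uses that a pushout along a cofibration with cofibrant domain, between cofibrant objects, computes the homotopy pushout, so that maps and homotopies can be transported along it; this is the only delicate point, and it is exactly where cofibrancy cannot be dropped.

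Granting (a), I would deduce (b) by regarding a span as an object of the diagram category $\M^{\Lambda}$, with $\Lambda=(\bullet\leftarrow\bullet\rightarrow\bullet)$, equipped with its Reedy model structure, for which $\colim\colon\M^{\Lambda}\to\M$ is left Quillen; by Ken Brown's lemma it preserves weak equivalences between Reedy cofibrant diagrams, i.e.\ spans with cofibrant apex whose two legs are cofibrations. The given spans need not be Reedy cofibrant---only $A_1\to A_0$, and one of $B_1\to B_0$, $B_1\to B_2$, are assumed to be cofibrations---so I would first factor the remaining legs to obtain Reedy cofibrant replacements mapping by levelwise weak equivalences to the originals; part (a) shows these replacements have the same pushouts up to weak equivalence, and this is exactly where the cofibration hypothesis on the $B$-side is used. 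The replaced spans are then related by a levelwise weak equivalence between Reedy cofibrant diagrams, so $\colim$ sends it to a weak equivalence, and a final application of $2$-out-of-$3$ gives the desired $A_0\amalg_{A_1}A_2\xrightarrow{\wequiv}B_0\amalg_{B_1}B_2$.
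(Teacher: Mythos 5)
Your part (a) breaks down at exactly the step you flag as delicate, and the patch you propose is circular. The reduction itself is fine: factoring $f$ and pasting pushouts does reduce (a) to showing that the cobase change of the acyclic fibration $\function{p}{T}{B}$ along the cofibration $\function{k}{T}{E}$, $E:=C\amalg_A T$, is a weak equivalence, and the section $s$ with $ps=\id_B$ and the fiberwise homotopy $sp\simeq\id_T$ do exist since $B,T$ are cofibrant. But to "transport" these across the pushout you must produce a map out of $E\amalg_T B$, and the natural candidate data ($\id_E$ on $E$ and $ks$ on $B$) do not agree on $T$: one has $ksp\neq k$, only $ksp\simeq k$. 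Repairing this requires extending the homotopy $kH\colon\thinspace T\tensordot I\rightarrow E$ along the cofibration $k$ relative to $\id_E$, which is a lifting problem against $E\rightarrow *$; it is solvable when the target is fibrant, and $E$ is not fibrant here. The principle you invoke instead --- that a pushout along a cofibration between cofibrant objects computes the homotopy pushout --- is not an independent fact you may quote: it is precisely the content of the proposition being proved (parts (a) and (b) together are its standard formulation), so your argument for (a) assumes its own conclusion. Note also that the proposed endgame needs the additional (true, but nontrivial) fact that a homotopy equivalence between cofibrant objects is a weak equivalence, via saturation of model categories.

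The good news is that the Reedy machinery you set up for (b) already proves (a), with no section/homotopy argument at all, if you use the \emph{asymmetric} Reedy structure on $\M^{\Lambda}$, $\Lambda=(0\leftarrow 1\rightarrow 2)$: assign degrees $\deg(2)=0$, $\deg(1)=1$, $\deg(0)=2$, so that a diagram is Reedy cofibrant when $X_1,X_2$ are cofibrant and $X_1\rightarrow X_0$ is a cofibration, with \emph{no} condition on $X_1\rightarrow X_2$; the constant functor is still right Quillen for this structure, so $\colim$ is left Quillen, and Ken Brown's lemma applied to the levelwise weak equivalence from $C\leftarrow A\rightarrow A$ to $C\leftarrow A\xrightarrow{f} B$ (both Reedy cofibrant) gives (a) immediately. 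With (a) secured this way, your deduction of (b) is essentially correct and is a genuine proof, different from the paper, which does not argue at all but cites Chach\'olski--Scherer \cite[I.2.4, I.2.5]{Chacholski_Scherer}; the only detail you should spell out is the construction of the levelwise weak equivalence between the two Reedy cofibrant replacement spans, which does not exist for free: factor the relevant legs as a cofibration followed by an \emph{acyclic fibration} and obtain the comparison maps by lifting the cofibration $A_1\rightarrow A_0$ (respectively $A_1\rightarrow \tilde{A}_2$) against that acyclic fibration, then apply the two-out-of-three property.
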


The following proposition, which is an exercise left to the reader, has been exploited, for instance, in \cite[2.1]{Bauer_Johnson_McCarthy} and \cite[13.2.1]{Hirschhorn}; it is closely related to the usual induced model structures on over-categories and under-categories; see, for instance,  \cite[3.10]{Dwyer_Spalinski}.

\begin{prop}[Factorization category of a map]
\label{prop:factorization_category_of_a_map}
Let $\M$ be a model category and $\function{z}{A}{Y}$ a map in $\M$. Denote by $\M(z)$ the category with objects the factorizations $\mathbf{X}\colon\thinspace A\rightarrow X\rightarrow Y$ of $z$ in $\M$ and morphisms $\function{\xi}{\mathbf{X}}{\mathbf{X'}}$ the commutative diagrams of the form
\begin{align*}
\xymatrix{
  \mathbf{X}:\ar@<-0.5ex>[d]^-{\xi} & 
  A\ar@{=}[d]\ar[r] & 
  X\ar[d]^-{\xi}\ar[r] & 
  Y\ar@{=}[d]\\
  \mathbf{X}': & 
  A\ar[r] & 
  X'\ar[r] & 
  Y
}
\end{align*}
in $\M$. Define a map $\function{\xi}{\mathbf{X}}{\mathbf{X}'}$ to be a weak equivalence (resp. fibration, resp. cofibration) if $\function{\xi}{X}{X'}$ is a weak equivalence (resp. fibration, resp. cofibration) in $\M$. With these three classes of maps, $\M(z)$ inherits a naturally occurring model structure from $\M$. Since the initial object (resp. terminal object) in $\M(z)$ has the form $A=A\xrightarrow{z} Y$ (resp. $A\xrightarrow{z}Y=Y$), it follows that $\mathbf{X}$ is cofibrant (resp. fibrant) if and only if $A\rightarrow X$ is a cofibration (resp. $X\rightarrow Y$ is a fibration) in $\M$.
\end{prop}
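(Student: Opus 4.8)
The plan is to recognize $\M(z)$ as an iterated comma category and then invoke the standard fact that under-categories and over-categories of a model category inherit induced model structures (see, for instance, \cite[3.10]{Dwyer_Spalinski}). First I would form the under-category $A\downarrow\M$, whose objects are maps $\function{a}{A}{X}$ in $\M$ and whose morphisms are maps in $\M$ compatible with the structure maps from $A$; it carries a model structure in which a morphism is a weak equivalence, fibration, or cofibration exactly when the underlying map of $\M$ is. The given map $z$ determines an object $\ol{z}:=(\function{z}{A}{Y})$ of $A\downarrow\M$.

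The key observation is that $\M(z)$ is isomorphic, as a category, to the over-category $(A\downarrow\M)\downarrow\ol{z}$. Indeed, an object of the latter is an object $\function{a}{A}{X}$ of $A\downarrow\M$ together with a morphism $\function{p}{X}{Y}$ to $\ol{z}$ in $A\downarrow\M$, i.e., a map satisfying $pa=z$; this is precisely a factorization $A\xrightarrow{a}X\xrightarrow{p}Y$ of $z$. Unwinding the definitions, a morphism in the over-category from such a factorization to $A\rightarrow X'\rightarrow Y$ is exactly a map $\function{\xi}{X}{X'}$ rendering the commutative diagram in the statement commutative. Applying the induced over-category model structure once more, $(A\downarrow\M)\downarrow\ol{z}$ is a model category whose three distinguished classes of maps are created by the composite forgetful functor $\M(z)\rightarrow A\downarrow\M\rightarrow\M$, which sends $\xi$ to $\function{\xi}{X}{X'}$. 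This matches the three classes of maps prescribed in the statement, so $\M(z)$ inherits the asserted model structure.

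It remains to identify the extremal objects. In the over-under category the initial object is the initial object $(A,\id_A)$ of $A\downarrow\M$ together with its unique map to $\ol{z}$, namely the factorization $A=A\xrightarrow{z}Y$; dually the terminal object is $\id_{\ol{z}}$, namely $A\xrightarrow{z}Y=Y$. The unique morphism from the initial object to a factorization $\mathbf{X}$ is the structure map $\function{a}{A}{X}$, and the unique morphism from $\mathbf{X}$ to the terminal object is $\function{p}{X}{Y}$; hence $\mathbf{X}$ is cofibrant (resp. fibrant) if and only if $A\rightarrow X$ is a cofibration (resp. $X\rightarrow Y$ is a fibration) in $\M$, as claimed. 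The only step requiring genuine care is the verification of the two induced model structures themselves: closure of the classes under retracts and the two-out-of-three property is immediate, since the classes are created by a forgetful functor to $\M$, but the factorization and lifting axioms require noting that factorizations and lifts performed in $\M$ automatically carry the required structure maps—the intermediate object of a factorization inherits a map from $A$ and a map to $Y$ by composition, and a lift produced in $\M$ respects the structure maps because of the commutativity already present in the diagram. This last bookkeeping is routine, which is precisely why the result is stated as an exercise.
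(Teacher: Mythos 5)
Your proposal is correct. The paper itself does not write out an argument: the statement is explicitly left as an exercise, and the proof given is only a citation to \cite[2.1]{Bauer_Johnson_McCarthy}, with the remark that the result ``is closely related to the usual induced model structures on over-categories and under-categories'' \cite[3.10]{Dwyer_Spalinski}, \cite[II.2.8]{Quillen}. Your argument is exactly that remark made precise: the isomorphism of categories $\M(z)\iso(A\downarrow\M)\downarrow\ol{z}$ reduces everything to two applications of the standard fact that under- and over-categories of a model category inherit a model structure whose three classes are created by the forgetful functor, and your unwinding of objects, morphisms, and the initial/terminal objects (hence the characterization of cofibrant and fibrant objects) is accurate. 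Compared with the direct route implicit in ``exercise for the reader''---checking the factorization and lifting axioms by hand, with the bookkeeping you describe at the end---your decomposition buys a proof in which no model-category axiom is verified directly; the only genuine content is the purely categorical identification of $\M(z)$ as an iterated comma category, after which the conclusion is formal. The one point worth stating explicitly if this were written up is the standard under-category fact itself (the dual of \cite[3.10]{Dwyer_Spalinski}, which treats over-categories), but that is a routine dualization.
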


\begin{proof}
This appears in \cite[2.1]{Bauer_Johnson_McCarthy} and is closely related to \cite[3.10]{Dwyer_Spalinski} and \cite[II.2.8]{Quillen}.
\end{proof}

The following subcell lifting property can be thought of as an $\capO$-algebra analog of Hirschhorn \cite[13.2.1]{Hirschhorn} as a key step in  establishing localizations in left proper celluar model categories. One technical difficulty with Proposition \ref{prop:detecting_TQ_local_O_algebras_part_1} for detecting $\TQ^\capA$-local $\capO$-algebras is that it involves a lifting condition with respect to a collection of maps, instead of a set of maps. Proposition \ref{prop:subcell_lifting_property} provides our first reduction towards eventually refining the lifting criterion for $\TQ^\capA$-local $\capO$-algebras to a set of maps. Even though the left properness assumption in \cite[13.2.1]{Hirschhorn} is almost never satisfied by $\capO$-algebras, in general, a key observation, that goes back to the work of Goerss-Hopkins \cite[1.5]{Goerss_Hopkins_moduli_problems} on moduli problems, is that the subcell lifting argument only requires an appropriate pushout diagram to be a homotopy pushout diagram---this is ensured (Proposition \ref{prop:certain_pushouts_are_homotopy_pushouts}) by the strong cofibration condition in Proposition \ref{prop:subcell_lifting_property}.

\begin{prop}[Subcell lifting property]
\label{prop:subcell_lifting_property}
Let $\function{p}{X}{Y}$ be a fibration of $\capO$-algebras. Then the following are equivalent:
\begin{itemize}
\item[(a)] The map $p$ has the right lifting property with respect to every strong cofibration $A\rightarrow B$ of $\capO$-algebras that is a $\TQ^\capA$-equivalence.
\item[(b)] The map $p$ has the right lifting property with respect to every subcell $\capO$-algebra inclusion $A\subset B$ that is a $\TQ^\capA$-equivalence.
\end{itemize}
\end{prop}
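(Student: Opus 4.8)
The plan is to treat the two implications separately; (a) $\Rightarrow$ (b) is formal, and (b) $\Rightarrow$ (a) carries all of the content. For (a) $\Rightarrow$ (b) I would simply observe that every subcell $\capO$-algebra inclusion $A\subset B$ is a strong cofibration: both $A$ and $B$ are cell $\capO$-algebras, hence cofibrant, and $A\subset B$ is itself a relative cell $\capO$-algebra (it attaches precisely the cells of $B$ not lying in $A$), hence a cofibration. So a $\TQ^\capA$-acyclic subcell inclusion is a $\TQ^\capA$-acyclic strong cofibration, and the lifting property assumed in (a) specializes immediately to the one in (b).

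For (b) $\Rightarrow$ (a) the idea is to replace an arbitrary $\TQ^\capA$-acyclic strong cofibration by an honest $\TQ^\capA$-acyclic subcell inclusion, to which (b) applies, and then to transport a solution back. Suppose $\function{i}{A}{B}$ is a strong cofibration and $\TQ^\capA$-equivalence, and that we are given a lifting problem against $p$ determined by $\function{g}{A}{X}$ and $\function{h}{B}{Y}$ with $pg=hi$. First I would run the small object argument on $*\rightarrow A$ to obtain an acyclic fibration $\function{u}{A^c}{A}$ with $A^c$ a cell $\capO$-algebra, and then factor the composite $\function{iu}{A^c}{B}$ as $A^c\xrightarrow{j}B^c\xrightarrow{v}B$ with $j$ a relative cell $\capO$-algebra and $v$ an acyclic fibration. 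Concatenating the presentations of $*\rightarrow A^c$ and $A^c\rightarrow B^c$ exhibits $A^c$ as a subcell $\capO$-algebra of $B^c$ (Definition \ref{defn:subcell_O_algebra}). Since $vj=iu$, and $u,v$ are weak equivalences while $i$ is a $\TQ^\capA$-equivalence, the fact that $\TQ^\capA$-equivalences are closed under composition and satisfy two-out-of-three (immediate from the construction of $\TQ^\capA$) forces $j$ to be a $\TQ^\capA$-equivalence; thus $j$ is a $\TQ^\capA$-acyclic subcell inclusion and (b) applies to it.

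Next I would transport the lifting problem. Because $p(gu)=hiu=hvj$, the pair $(gu,hv)$ is a lifting problem for $j$ against $p$, so (b) supplies $\function{\ell}{B^c}{X}$ with $\ell j=gu$ and $p\ell=hv$. To descend $\ell$ to the original square I would build compatible sections of the comparison maps: as $u$ is an acyclic fibration and $A$ is cofibrant there is a section $\function{s_A}{A}{A^c}$ with $us_A=\id_A$, and then, since $i$ is a cofibration and $v$ is an acyclic fibration with $v(js_A)=ius_A=i$, a lift in the corresponding square yields $\function{s_B}{B}{B^c}$ with $vs_B=\id_B$ and $s_Bi=js_A$. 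Setting $\bar\ell:=\ell s_B$ then solves the original problem, since $p\bar\ell=p\ell s_B=hvs_B=h$ and $\bar\ell i=\ell s_B i=\ell j s_A=gus_A=g$.

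The \emph{main obstacle} is precisely this descent: property (b) only furnishes lifts against subcell inclusions emanating from the initial object $*$, so a general cofibrant domain $A$ must first be turned into a cell $\capO$-algebra $A^c$, after which one is handed a lift on the replacement $B^c$ rather than on $B$ itself. The argument succeeds only because one can produce strictly compatible sections $s_A,s_B$ of the comparison acyclic fibrations, and this in turn relies on the cofibrancy of both $A$ and $B$—that is, on the strong cofibration hypothesis—together with the lifting of acyclic fibrations against cofibrations. I would expect verifying these compatibilities, and confirming that $A^c\subset B^c$ is genuinely a subcell inclusion for the concatenated presentation, to be the only delicate points. Notably, this route never attaches cells to $A$ directly (it replaces $A$ wholesale by $A^c$), so the fibration hypothesis on $p$ and the homotopy-pushout input of Proposition \ref{prop:certain_pushouts_are_homotopy_pushouts} do not appear to be needed for the implication as I have set it up.
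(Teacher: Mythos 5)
Your proposal is correct, and its first half coincides with the paper's own proof: the paper likewise factors ${*}\rightarrow A'\xrightarrow{a} A$ and then $A'\xrightarrow{i'}B'\xrightarrow{b}B$ by the small object argument (your $u$, $j$, $v$), notes that $i'$ is a subcell inclusion by construction, and uses two-out-of-three to see it is a $\TQ^\capA$-equivalence. Where you diverge is the descent step, and your route is genuinely different and more elementary. The paper forms the pushout $M:=A\amalg_{A'}B'$ of $i'$ along $a$, invokes Proposition \ref{prop:certain_pushouts_are_homotopy_pushouts} (the Goerss--Hopkins observation standing in for left properness) to conclude the induced map $\function{\alpha}{M}{B}$ is a weak equivalence, obtains a lift against the pushout $\function{d}{A}{M}$ of $i'$, and then descends it to $B$ by a homotopy-classes argument in the factorization category $\AlgO(pg)$ of Proposition \ref{prop:factorization_category_of_a_map}: there $\mathbf{M},\mathbf{B}$ are cofibrant, $\mathbf{X}$ is fibrant \emph{because $p$ is a fibration}, so $[\mathbf{B},\mathbf{X}]\iso[\mathbf{M},\mathbf{X}]\neq\emptyset$. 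You instead exhibit $i$ as a strict retract of the subcell inclusion $j$ in the arrow category: the section $s_A$ of $u$ (using cofibrancy of $A$, i.e., the strong cofibration hypothesis) and the lift $s_B$ of $\id_B$ through the acyclic fibration $v$ compatible with $js_A$ (using that $i$ is a cofibration) give $us_A=\id$, $vs_B=\id$, $s_Bi=js_A$, $vj=iu$, and the lift $\ell$ supplied by (b) transports strictly to $\bar\ell=\ell s_B$; your verification that $\bar\ell i=g$ and $p\bar\ell=h$ is correct. The trade-off: your argument is the relative form of the standard fact that cofibrations with cofibrant domain are retracts of (sub)cell inclusions, it avoids the pushout, the homotopy-pushout input, and---as you correctly observe---the fibration hypothesis on $p$ for this implication; the paper's argument never strictifies, needing only a lift up to the weak equivalence $\alpha$, which is the shape of argument (lifting up to homotopy in a factorization category) that the authors are deliberately showcasing as the replacement for Hirschhorn's left-properness hypothesis, and which is reusable in situations where strictly commuting sections are not available. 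For the statement at hand, your retract argument is shorter and does the job.
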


\begin{proof}
Since every subcell $\capO$-algebra inclusion $A\subset B$ is a strong cofibration, the implication $\text{(a)}\Rightarrow\text{(b)}$ is immediate. Conversely, suppose $p$ has the right lifting property with respect to every subcell $\capO$-algebra inclusion that is a $\TQ^\capA$-equivalence. Let $\function{i}{A}{B}$ be a strong cofibration of $\capO$-algebras that is a $\TQ^\capA$-equivalence and consider any solid commutative diagram of the form
\begin{align*}
\xymatrix{
  A\ar[d]_-{i}\ar[r]^-{g} & X\ar[d]^-{p}\\
  B\ar[r]_-{h}\ar@{.>}[ur]^{\xi} & Y
}
\end{align*}
in $\AlgO$. We want to verify that a lift $\xi$ exists. The first step is to get subcell $\capO$-algebras into the picture. Running the small object argument with respect to the generating cofibrations in $\AlgO$, we first functorially factor the map ${*}\rightarrow A$ as a cofibration followed by an acyclic fibration
$
  {*}\rightarrow A'\xrightarrow{a} A
$, and then we functorially factor the composite map $A'\rightarrow A\rightarrow B$ as a cofibration followed by an acyclic fibration
$
  A'\xrightarrow{i'} B'\xrightarrow{b} B
$. Putting it all together, we get a commutative diagram of the form
\begin{align*}
\xymatrix{
  A'\ar[d]_-{i'}\ar[r]^-{a} & A\ar[d]^-{i}\ar[r]^-{g} & X\ar[d]^-{p}\\
  B'\ar[r]^-{b} & B\ar[r]^-{h} & Y
}
\end{align*}
where $i'$ is a subcell $\capO$-algebra inclusion, by construction. Furthermore, since $i$ is a $\TQ^\capA$-equivalence and $a,b$ are weak equivalences, it follows that $i'$ is a $\TQ^\capA$-equivalence. Denote by $M$ the pushout of the upper left-hand corner maps $i'$ and $a$, and consider the induced maps $c,d,\alpha$ of the form 

\begin{align*}
\xymatrix{
  A'\ar[dd]_-{i'}\ar[rr]^-{a} && 
  A\ar[dd]^(0.6){i}\ar[rr]^-{g}\ar@/^1.0ex/[dl]_-{d} && 
  X\ar[dd]^-{p}\\
  & M\ar@{.>}^-{\alpha}[dr]\ar@{.>}@/_1.0ex/[urrr]_-{\xi'}\\
  B'\ar@/_1.0ex/[ur]^-{c}\ar[rr]^-{b} && 
  B\ar[rr]^-{h}\ar@{.>}@/_1.0ex/[uurr]_-{\xi} && 
  Y
}
\end{align*}
Since $B',A',A$ are cofibrant and $i'$ is a cofibration, we know that $M$ is a homotopy pushout (Proposition \ref{prop:certain_pushouts_are_homotopy_pushouts}); in particular, since $a$ is a weak equivalence, it follows that $c$ is a weak equivalence. Since $c,b$ are weak equivalences, we know that $\alpha$ is a weak equivalence. By assumption, $p$ has the right lifting property with respect to $i'$, and hence with respect to its pushout $d$. In particular, a lift $\xi'$ exists such that $\xi' d=g$ and $p\xi'=h\alpha$. It turns out this is enough to conclude that a lift $\xi$ exists such that $\xi i=g$ and $p\xi=h$. Here is why: Consider the factorization category $\AlgO(pg)$ (Proposition \ref{prop:factorization_category_of_a_map}) of the map $pg$, together with the objects
\begin{align*}
  \mathbf{B}:\ A\xrightarrow{i} B\xrightarrow{h} Y,\quad
  \mathbf{X}:\ A\xrightarrow{g} X\xrightarrow{p} Y,\quad
  \mathbf{M}:\ A\xrightarrow{d} M\xrightarrow{h\alpha} Y
\end{align*}
Note that giving the desired lift $\xi$ is the same as giving a map of the form
\begin{align*}
\xymatrix{
  \mathbf{X}: & 
  A\ar[r] & 
  X\ar[r] & 
  Y\\
  \mathbf{B}:\ar@<+0.5ex>@{.>}[u]_(0.47){\xi} & 
  A\ar@{=}[u]\ar[r] & 
  B\ar@{.>}[u]_(0.47){\xi}\ar[r] & 
  Y\ar@{=}[u]}
\end{align*}
in $\AlgO(pg)$. Also, we know from above that a lift $\xi'$ exists; i.e., we have shown there is a map of the form
\begin{align*}
\xymatrix{
  \mathbf{X}: & 
  A\ar[r] & 
  X\ar[r] & 
  Y\\
  \mathbf{M}:\ar@<+0.5ex>@{.>}[u]_(0.47){\xi'} & 
  A\ar@{=}[u]\ar[r] & 
  M\ar@{.>}[u]_(0.47){\xi'}\ar[r] & 
  Y\ar@{=}[u]
}
\end{align*}
in $\AlgO(pg)$. We also know from above that the map $\alpha$ is a weak equivalence, and hence we have a weak equivalence of the form
\begin{align*}
\xymatrix{
  \mathbf{M}:\ar@<-0.5ex>@{.>}[d]^-{\alpha}_-{\wequiv} & 
  A\ar@{=}[d]\ar[r] & 
  M\ar@{.>}[d]^-{\alpha}_-{\wequiv}\ar[r] & 
  Y\ar@{=}[d]\\
  \mathbf{B}: & 
  A\ar[r] & 
  B\ar[r] & 
  Y
}
\end{align*}
in $\AlgO(pg)$. Since $i,d$ are cofibrations, we know that $\mathbf{B},\mathbf{M}$ are cofibrant in $\AlgO(pg)$, and since $p$ is a fibration, we know that $\mathbf{X}$ is fibrant in $\AlgO(pg)$ (Proposition \ref{prop:factorization_category_of_a_map}). It follows that the weak equivalence $\function{\alpha}{\mathbf{M}}{\mathbf{B}}$ induces an isomorphism
\begin{align*}
  [\mathbf{M},\mathbf{X}]\xleftarrow{\Iso}[\mathbf{B},\mathbf{X}]
\end{align*}
on homotopy classes of maps in $\AlgO(pg)$, and since the left-hand side is non-empty, it follows that the right-hand side is also non-empty; in other words, there exists a map $[\xi]\in[\mathbf{B},\mathbf{X}]$. Hence we have verified there exists a map of the form $\function{\xi}{\mathbf{B}}{\mathbf{X}}$ in $\AlgO(pg)$; in other words, we have shown that the desired lift $\xi$ exists. This completes the proof of the implication $(b)\Rightarrow (a)$.
\end{proof}

\begin{prop}[Detecting $\TQ^\capA$-local $\capO$-algebras: Part 2]
\label{prop:detecting_TQ_local_O_algebras_part_2}
Let $X$ be a fibrant $\capO$-algebra. Then $X$ is $\TQ^\capA$-local if and only if $X\rightarrow *$ satisfies the right lifting property with respect to every subcell $\capO$-algebra inclusion $A\subset B$ that is a $\TQ^\capA$-equivalence.
\end{prop}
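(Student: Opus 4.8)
The plan is to deduce this characterization by chaining together two results already in hand: the weak $\TQ^\capA$-fibration criterion of Proposition \ref{prop:detecting_TQ_local_O_algebras_part_1} and the subcell lifting property of Proposition \ref{prop:subcell_lifting_property}. The crucial preliminary observation is that, because $X$ is assumed fibrant, the map $\function{p}{X}{*}$ is a fibration of $\capO$-algebras, so we are in precisely the setting to which Proposition \ref{prop:subcell_lifting_property} applies, with $p$ the fibration under consideration.

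First I would apply Proposition \ref{prop:detecting_TQ_local_O_algebras_part_1}, which tells us that for the fibrant $\capO$-algebra $X$, being $\TQ^\capA$-local is equivalent to the statement that $X\rightarrow *$ has the right lifting property with respect to every $\TQ^\capA$-acyclic strong cofibration $A\rightarrow B$. This is exactly condition (a) of Proposition \ref{prop:subcell_lifting_property} for the fibration $p=(X\rightarrow *)$. Next I would invoke Proposition \ref{prop:subcell_lifting_property} directly: for this fibration $p$, condition (a)---right lifting against all $\TQ^\capA$-acyclic strong cofibrations---is equivalent to condition (b)---right lifting against all subcell $\capO$-algebra inclusions $A\subset B$ that are $\TQ^\capA$-equivalences. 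Concatenating these two equivalences yields precisely the asserted characterization of $\TQ^\capA$-local $\capO$-algebras.

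Since the argument is a straightforward composite of two prior results, I do not expect a genuine obstacle; the only hypothesis that must be tracked is the fibrancy of $X$, which is exactly what licenses treating $X\rightarrow *$ as the fibration $p$ required in Proposition \ref{prop:subcell_lifting_property}. All of the substantive work---replacing the unwieldy lifting condition against the (proper-class-sized) collection of all $\TQ^\capA$-acyclic strong cofibrations with a lifting condition against the more tractable subcell inclusions---has already been carried out inside the proof of the subcell lifting property, via the factorization-category and homotopy-pushout argument that circumvents the failure of left properness. In that sense, Proposition \ref{prop:detecting_TQ_local_O_algebras_part_2} is best viewed as the packaging of Propositions \ref{prop:detecting_TQ_local_O_algebras_part_1} and \ref{prop:subcell_lifting_property} into the precise form needed for the later reduction to a set of maps.
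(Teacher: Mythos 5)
Your proposal is correct and is exactly the paper's argument: the paper's proof cites Proposition \ref{prop:subcell_lifting_property} (with Proposition \ref{prop:detecting_TQ_local_O_algebras_part_1} implicitly supplying the equivalence between $\TQ^\capA$-locality and lifting against $\TQ^\capA$-acyclic strong cofibrations), applied to the fibration $X\rightarrow *$ furnished by the fibrancy of $X$. Your write-up simply makes explicit the two-step chaining that the paper compresses into one sentence.
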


\begin{proof}
This follows immediately from Proposition \ref{prop:subcell_lifting_property}.
\end{proof}

\section{$\TQ^\capA$-local homotopy theory}

The purpose of this section is to establish a version of Proposition \ref{prop:subcell_lifting_property} (see Proposition \ref{prop:bounded_subcell_lifting_property}), and hence a corresponding version of Proposition \ref{prop:detecting_TQ_local_O_algebras_part_2} (see Proposition \ref{prop:detecting_TQ_local_O_algebras_part_3}), that includes a bound on how many cells $B$ has. Once this is accomplished, we can run the small object argument to prove the key factorization property (Proposition \ref{prop:key_factorization_needed_for_TQ_local_homotopy_theory}) needed  to establish the associated $\TQ^\capA$-local homotopy theory on $\capO$-algebras (Theorem \ref{thm:TQ_local_homotopy_theory}) and to construct the associated $\TQ^\capA$-localization functor on cofibrant $\capO$-algebras as a weak $\TQ^\capA$-fibrant (Definition \ref{defn:weak_TQ_fibrant}) replacement functor. Our argument can be thought of as an $\capO$-algebra analog of the bounded cofibration property in Bousfield \cite[11.2]{Bousfield_localization_spaces}, Goerss-Jardine \cite[X.2.13]{Goerss_Jardine}, and Jardine \cite[5.2]{Jardine_local_homotopy_theory}, mixed together with the subcell inclusion ideas in Hirschhorn \cite[2.3.7]{Hirschhorn}.

\begin{prop}
\label{prop:strong_cofibrations_give_les_in_TQ_homology_groups}
Let $\function{i}{A}{B}$ be a strong cofibration and consider the pushout diagram of the form
\begin{align}
\label{eq:quotient_of_a_strong_cofibration}
\xymatrix{
  A\ar[d]\ar[r]^-{i} & B\ar[d]\\
  {*}\ar[r] & B\sslash A
}
\end{align}
in $\AlgO$. Then there is an associated cofibration sequence of the form
\begin{align*}
  \TQ^\capA(A)\rightarrow
  \TQ^\capA(B)\rightarrow
  \TQ^\capA(B\sslash A)
\end{align*}
in $\Mod_\capA$ and corresponding long exact sequence of abelian groups of the form
\begin{align}
\label{eq:long_exact_sequence_in_TQ_star}
  \cdots\TQ^\capA_{s+1}(B\sslash A)\rightarrow
  \TQ^\capA_{s}(A)\rightarrow
  \TQ^\capA_{s}(B)\rightarrow
  \TQ^\capA_{s}(B\sslash A)\rightarrow
  \TQ^\capA_{s-1}(A)\rightarrow\cdots
\end{align}
where $\TQ^\capA_s(X):=\pi_s\TQ^\capA(X)$ denotes the $s$-th $\TQ^\capA$-homology group of an $\capO$-algebra $X$ and $\pi_*$ denotes the derived (or true) homotopy groups of a symmetric spectrum \cite{Schwede_book_project, Schwede_homotopy_groups}.
\end{prop}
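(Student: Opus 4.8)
The plan is to apply the left derived adjoint to the pushout square \eqref{eq:quotient_of_a_strong_cofibration}, identify $\TQ^\capA(B\sslash A)$ with a homotopy cofiber in $\Mod_\capA$, and then exploit stability of $\Mod_\capA$ to extract the long exact sequence.

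First I would note that because $\function{i}{A}{B}$ is a strong cofibration, the objects $A$ and $B$ are cofibrant, so that $\TQ^\capA(A)\wequiv Q(A)$ and $\TQ^\capA(B)\wequiv Q(B)$ with no cofibrant replacement required. To bring $B\sslash A$ into the same framework, observe that $*\rightarrow B\sslash A$ is a cofibration, being a pushout of the cofibration $i$; since the initial object $*$ is cofibrant, it follows that $B\sslash A$ is cofibrant and hence $\TQ^\capA(B\sslash A)\wequiv Q(B\sslash A)$ as well. All three of these comparisons are the natural weak equivalences, so they fit together into a commuting diagram relating $\TQ^\capA$ of the square to $Q$ of the square.

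Next I would apply $Q$ to \eqref{eq:quotient_of_a_strong_cofibration}. As a left adjoint, $Q$ preserves colimits, so the image is again a pushout square in $\Mod_\capA$, and since $Q$ preserves the initial object the bottom-left corner becomes $Q(*)=*$. Because $Q$ is left Quillen it sends the cofibration $i$ between cofibrant objects to a cofibration $Q(A)\rightarrow Q(B)$ between cofibrant objects; hence
\[
\xymatrix{
  Q(A)\ar[d]\ar[r] & Q(B)\ar[d]\\
  {*}\ar[r] & Q(B\sslash A)
}
\]
is a pushout along a cofibration of cofibrant objects, which by Proposition \ref{prop:certain_pushouts_are_homotopy_pushouts} is a homotopy pushout; equivalently, $Q(B\sslash A)$ is the homotopy cofiber of $Q(A)\rightarrow Q(B)$. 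Transporting back along the natural weak equivalences above then yields the asserted cofibration sequence $\TQ^\capA(A)\rightarrow\TQ^\capA(B)\rightarrow\TQ^\capA(B\sslash A)$ in $\Mod_\capA$.

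Finally, since $\Mod_\capA$ is a stable model category (left modules over the ring spectrum $\capA$), every cofiber sequence is simultaneously a fiber sequence and extends to the right by suspension; applying the derived homotopy groups $\pi_*$ then produces exactly the long exact sequence \eqref{eq:long_exact_sequence_in_TQ_star}, with connecting homomorphisms supplied by the stable structure. I expect the one point requiring care to be the identification of $B\sslash A$ with a homotopy cofiber and its preservation by $Q$: this is precisely where the strong cofibration hypothesis (cofibrancy of $A$ and $B$) enters, allowing Proposition \ref{prop:certain_pushouts_are_homotopy_pushouts} to stand in for the left properness that $\AlgO$ lacks. The remaining steps are formal.
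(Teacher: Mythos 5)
Your proof is correct and takes essentially the same approach as the paper's own (one-sentence) proof: apply the left Quillen functor $Q$ to the pushout square \eqref{eq:quotient_of_a_strong_cofibration}, using that it preserves pushouts, cofibrations, and cofibrant objects, so that $Q(B\sslash A)$ is the cofiber of a cofibration between cofibrant objects in $\Mod_\capA$, and then read off the long exact sequence from stability. The details you supply (cofibrancy of $B\sslash A$, agreement of $Q$ with $\TQ^\capA$ on cofibrant objects, the homotopy-pushout identification) are exactly what the paper leaves implicit.
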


\begin{proof}
This is because $Q$ is a left Quillen functor and hence preserves cofibrations and pushout diagrams.
\end{proof}

\begin{defn}
\label{defn:choice_of_large_enough_regular_cardinal}
Let $\kappa$ be a large enough (infinite) regular cardinal such that 
\begin{align*}
  \kappa > \bigl|\oplus_{s,m,k}\oplus_{H}\TQ^\capA_s\bigl(\capO\circ(D_{m}^{H,k}/S_{m}^{H,k})\bigr)\bigr|
\end{align*}
where the first direct sum is indexed over all $s\in\ZZ,\ m\geq 1,\ k\geq 0$ and the second direct sum is indexed over all subgroups $H\subset \Sigma_m$.
\end{defn}

\begin{rem}
\label{rem:significance_of_kappa}
The significance of this choice of regular cardinal $\kappa$ arises from the cofiber sequence of the form
\begin{align*}
  \TQ^\capA(Z_n)\rightarrow \TQ^\capA(Z_{n+1})
  \rightarrow\coprod_{i\in I_n}\TQ^\capA\bigl(\capO\circ(D_{m_i}^{H_i,k_i}/S_{m_i}^{H_i,k_i})\bigr)
\end{align*}
in $\Mod_\capA$ associated to the pushout diagram \eqref{eq:presentation_of_relative_cell_O_algebra}.
\end{rem}

\begin{prop}
Let $Z$ be a cell $\capO$-algebra with less than $\kappa$ cells (with respect to a choice of presentation $*\rightarrow Z$). Then 
\begin{align*}
  \bigl|\oplus_s\TQ^\capA_s(Z)\bigr| < \kappa
\end{align*}
where the direct sum is indexed over all $s\in\ZZ$.
\end{prop}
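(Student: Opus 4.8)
The plan is to induct up the cell filtration, controlling $\TQ^\capA$-homology one stage at a time and then passing to the colimit. Fix a presentation $*=Z_0\to Z_1\to\cdots\to Z_\infty:=\colim_n Z_n\Iso Z$ of $*\to Z$ as in Definition \ref{defn:relative_cell_O_algebra}; its set of cells is $\sqcup_{n\geq 0}I_n$, so by hypothesis $|\sqcup_{n\geq 0}I_n|<\kappa$. Write $\kappa_0:=\bigl|\oplus_{s,m,k}\oplus_H\TQ^\capA_s(\capO\circ(D_m^{H,k}/S_m^{H,k}))\bigr|$, so that $\kappa_0<\kappa$ by Definition \ref{defn:choice_of_large_enough_regular_cardinal}; enlarging $\kappa$ if necessary, we may assume $\kappa$ is uncountable (so $\omega<\kappa$). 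I would first prove, by induction on $n$, that $|\oplus_s\TQ^\capA_s(Z_n)|<\kappa$ for every finite stage $Z_n$, and then deduce the claim for $Z=\colim_n Z_n$.

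For the inductive step (the base case $Z_0=*$ being trivial, since $\TQ^\capA(*)\wequiv *$), I would combine the pushout \eqref{eq:presentation_of_relative_cell_O_algebra} with the cofiber sequence of Remark \ref{rem:significance_of_kappa} and Proposition \ref{prop:strong_cofibrations_give_les_in_TQ_homology_groups} to obtain, for each $s\in\ZZ$, an exact sequence
\begin{align*}
  \TQ^\capA_s(Z_n)\rightarrow\TQ^\capA_s(Z_{n+1})\rightarrow\bigoplus_{i\in I_n}\TQ^\capA_s\bigl(\capO\circ(D_{m_i}^{H_i,k_i}/S_{m_i}^{H_i,k_i})\bigr)
\end{align*}
of abelian groups, using that $\pi_*$ carries the coproduct (wedge) in $\Mod_\capA$ to the direct sum of homotopy groups (true homotopy groups commute with the filtered colimit of finite wedges). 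By exactness, $\TQ^\capA_s(Z_{n+1})$ is an extension of a subgroup of the right-hand term by a quotient of $\TQ^\capA_s(Z_n)$, whence $|\TQ^\capA_s(Z_{n+1})|\leq|\TQ^\capA_s(Z_n)|\cdot c_s$ with $c_s:=\bigl|\bigoplus_{i\in I_n}\TQ^\capA_s(\cdots)\bigr|$. Summing over $s$ and using $|\TQ^\capA_s(Z_n)|\leq A:=|\oplus_s\TQ^\capA_s(Z_n)|$ gives $|\oplus_s\TQ^\capA_s(Z_{n+1})|\leq A\cdot C$, where $C:=\bigl|\oplus_s\bigoplus_{i\in I_n}\TQ^\capA_s(\cdots)\bigr|$. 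Reindexing $C$ over the cells of $I_n$ shows $C\leq|I_n|\cdot\kappa_0$, since the contribution of each individual cell is a sub-sum of the direct sum defining $\kappa_0$; as $|I_n|<\kappa$ and $\kappa_0<\kappa$, we get $C<\kappa$. Since also $A<\kappa$ by the inductive hypothesis, and the product of two cardinals below the infinite cardinal $\kappa$ stays below $\kappa$, we conclude $|\oplus_s\TQ^\capA_s(Z_{n+1})|<\kappa$.

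Passing to the colimit is where regularity of $\kappa$ does the real work, and this is the step I expect to be the \emph{main obstacle}. Because each $Z_n\to Z_{n+1}$ is a cofibration of cofibrant objects and $\TQ^\capA(-)\wequiv Q(-)$ with $Q$ left Quillen, the left derived functor preserves this transfinite composition, so $\TQ^\capA(Z)\wequiv\colim_n\TQ^\capA(Z_n)$ and hence $\TQ^\capA_s(Z)\Iso\colim_n\TQ^\capA_s(Z_n)$ for each $s$, as $\pi_*$ commutes with sequential colimits along cofibrations of symmetric spectra. Therefore $|\oplus_s\TQ^\capA_s(Z)|\leq\bigl|\sqcup_n\oplus_s\TQ^\capA_s(Z_n)\bigr|=\sum_{n<\omega}|\oplus_s\TQ^\capA_s(Z_n)|$. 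Each summand is $<\kappa$ by the induction above, and there are only $\omega<\kappa$ of them; since $\kappa$ is regular the supremum of these cardinals is again $<\kappa$, so the countable sum is bounded by $\omega$ times that supremum, which is $<\kappa$. This yields $|\oplus_s\TQ^\capA_s(Z)|<\kappa$. The only genuine subtlety is the bookkeeping here: one must know both that $\TQ^\capA$ commutes with the cellular colimit and that regularity converts ``fewer than $\kappa$ stages, each contributing fewer than $\kappa$ classes'' into a total of fewer than $\kappa$ classes; the inductive cardinality estimate and the cofiber sequences of Remark \ref{rem:significance_of_kappa} take care of everything else.
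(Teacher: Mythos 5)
Your proof is correct and follows essentially the same route as the paper's: induction up the cell filtration using the long exact sequence in $\TQ^\capA$-homology coming from Remark \ref{rem:significance_of_kappa} and Proposition \ref{prop:strong_cofibrations_give_les_in_TQ_homology_groups}, followed by passing to the colimit $Z\Iso\colim_n Z_n$. The paper leaves the colimit step and the cardinal arithmetic (regularity of $\kappa$, bounding the direct sum over $I_n$) implicit, whereas you spell them out; this is elaboration of the same argument, not a different one.
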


\begin{proof}
Using the presentation notation in \eqref{eq:presentation_of_relative_cell_O_algebra} (taking $W=*$), this follows from Remark \ref{rem:significance_of_kappa}, together with  Proposition \ref{prop:strong_cofibrations_give_les_in_TQ_homology_groups}, by induction on $n$. In more detail: Since $Z_0=*$ we know that $|\oplus_s\TQ^\capA_s(Z_0)|<\kappa$. Let $n\geq 0$ and assume that 
\begin{align}
\label{eq:cardinality_estimate}
  \bigl|\oplus_s\TQ^\capA_s(Z_n)\bigr|<\kappa
\end{align} 
We want to show that $\bigl|\oplus_s\TQ^\capA_s(Z_{n+1})\bigr|<\kappa$. Consider the long exact sequence in $\TQ^\capA$-homology groups of the form
\begin{align}
  \dots\rightarrow
  \TQ^\capA_{s}(Z_n)\rightarrow
  \TQ^\capA_{s}(Z_{n+1})\rightarrow
  \bigoplus_{i\in I_n}\TQ^\capA_s\bigl(\capO\circ(D_{m_i}^{H_i,k_i}/S_{m_i}^{H_i,k_i})\bigr)\rightarrow\dots
\end{align}
associated to the cofiber sequence in Remark \ref{rem:significance_of_kappa}. It follows easily that
\begin{align*}
  \bigl|\TQ^\capA_s(Z_{n+1})\bigr|\leq
  \bigl|\TQ^\capA_s(Z_n)\oplus
  \bigoplus_{i\in I_n}\TQ^\capA_s\bigl(\capO\circ(D_{m_i}^{H_i,k_i}/S_{m_i}^{H_i,k_i})\bigr)
  \bigr| <\kappa
\end{align*}
and hence $\bigl|\oplus_s\TQ^\capA_s(Z_{n+1})\bigr|<\kappa$. Hence we have verified, by induction on $n$, that \eqref{eq:cardinality_estimate} is true for every $n\geq 0$; noting that $Z\Iso Z_\infty=\colim_n Z_n$ (by definition) completes the proof.
\end{proof}

\begin{prop}[Bounded subcell property]
\label{prop:bounded_subcell_property}
Let $M$ be a cell $\capO$-algebra and $L\subset M$ a subcell $\capO$-algebra. If $L\neq M$ and $L\subset M$ is a $\TQ^\capA$-equivalence, then there exists $A\subset M$ subcell $\capO$-algebra such that
\begin{itemize}
\item[(i)] $A$ has less than $\kappa$ cells 
\item[(ii)] $A\not\subset L$
\item[(iii)] $L\subset L\cup A$ is a $\TQ^\capA$-equivalence
\end{itemize}
\end{prop}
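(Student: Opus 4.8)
The plan is to build the subcell $\capO$-algebra $A$ by a countable ``zig-zag'' induction that alternates between two requirements: enlarging $A$ enough to kill troublesome $\TQ^\capA$-homology classes (so that condition (iii) holds in the colimit), while keeping the number of cells below $\kappa$ at each stage (so that condition (i) survives). The mechanism for controlling $\TQ^\capA$-homology is the long exact sequence of Proposition~\ref{prop:strong_cofibrations_give_les_in_TQ_homology_groups}, applied to the subcell inclusions produced by the pushout square of Proposition~\ref{prop:pushout_diagram_of_subcell_O_algebras}. Since $L\neq M$, there is at least one cell of $M$ not in $L$; I would pick any finite (hence $<\kappa$) subcell $A_0\subset M$ with $A_0\not\subset L$, which immediately secures condition~(ii) for $A_0$ and, since enlarging only adds cells, for the final $A\supseteq A_0$.

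First I would set up the inductive step. Having constructed a subcell $A_n\subset M$ with fewer than $\kappa$ cells, I consider the inclusion $L\subset L\cup A_n$ (a subcell inclusion by Proposition~\ref{prop:pushout_diagram_of_subcell_O_algebras}) and examine the relative $\TQ^\capA$-homology, i.e.\ the long exact sequence attached to this strong cofibration. The obstruction to $L\subset L\cup A_n$ being a $\TQ^\capA$-equivalence is measured by $\oplus_s\TQ^\capA_s\bigl((L\cup A_n)\sslash L\bigr)$. Because $A_n$ has $<\kappa$ cells, the preceding proposition (bounding $|\oplus_s\TQ^\capA_s(Z)|<\kappa$ for cell $\capO$-algebras with $<\kappa$ cells) shows there are fewer than $\kappa$ such obstruction classes. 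For each such class, the hypothesis that $L\subset M$ \emph{is} a $\TQ^\capA$-equivalence means the class must die after mapping up into $M$; concretely, each class is in the image of (or is killed by) finitely many additional cells of $M$, since $\TQ^\capA$-homology commutes with the filtered colimit presenting $M$. I would therefore choose, for each of the $<\kappa$ obstruction classes, a finite subcell of $M$ witnessing its vanishing in $M$, and let $A_{n+1}$ be the union of $A_n$ with all these finitely-many-cells-each witnesses. As a union of fewer than $\kappa$ finite subcells, $A_{n+1}$ still has fewer than $\kappa$ cells (here $\kappa$ regular is essential), and $A_n\subset A_{n+1}$.

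Then I would set $A:=\bigcup_n A_n=\colim_n A_n$, a subcell $\capO$-algebra of $M$ with fewer than $\kappa$ cells (a $\kappa$-indexed union of $<\kappa$-cell subcells, using regularity of $\kappa$ once more), giving condition~(i); and $A\supseteq A_0\not\subset L$ gives~(ii). For condition~(iii), I would verify that $L\subset L\cup A$ induces an isomorphism on all $\TQ^\capA_s$: any class in $\oplus_s\TQ^\capA_s\bigl((L\cup A)\sslash L\bigr)$ comes, by commutation of $\TQ^\capA$-homology with the filtered colimit $L\cup A=\colim_n (L\cup A_n)$, from some finite stage $L\cup A_n$, and by construction of $A_{n+1}$ it is already killed at stage $L\cup A_{n+1}$; hence the relative $\TQ^\capA$-homology vanishes and the long exact sequence forces $L\subset L\cup A$ to be a $\TQ^\capA$-equivalence.

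The main obstacle I anticipate is making the ``each obstruction class dies after finitely many cells of $M$'' step fully precise: one must argue carefully that $\TQ^\capA_*$ commutes with the transfinite filtered colimit presenting $M$ (so that classes are detected at finite stages and killed at finite stages), and that the witnessing cells can be organized into an honest subcell $\capO$-algebra compatible with the chosen presentation of $*\rightarrow M$ via the compatible-subset description of subcells. The cardinality bookkeeping---that a $<\kappa$-indexed union of finite subcells has $<\kappa$ cells, and likewise at the colimit stage---is routine given that $\kappa$ is regular, but it is exactly where the choice of $\kappa$ in Definition~\ref{defn:choice_of_large_enough_regular_cardinal} is used, so I would state those two applications of regularity explicitly.
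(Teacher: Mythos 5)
Your proposal is correct and follows essentially the same route as the paper's proof: the same inductive construction $A_0\subset A_1\subset\cdots$ that kills the obstruction groups $\TQ^\capA_*(L\cup A_n\sslash L)$ by adjoining finite subcells detected through the filtered-colimit presentation of $M$ (using $\TQ^\capA_*(M\sslash L)=0$), the same cardinality bookkeeping via regularity of $\kappa$, and the same colimit argument for (iii). The only cosmetic difference is that the paper bounds the number of obstruction classes via the long exact sequence of the cofiber sequence $L\cap A_n\rightarrow A_n\rightarrow L\cup A_n\sslash L$ rather than applying the cardinality proposition directly to the quotient.
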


\begin{proof}
The main idea is to develop a $\TQ^\capA$-homology analog for $\capO$-algebras of the closely related argument in Bousfield's localization of spaces work \cite{Bousfield_localization_spaces}; we have benefitted from the subsequent elaboration in Goerss-Jardine \cite[X.3]{Goerss_Jardine}. We are effectively replacing arguments in terms of adding on non-degenerate simplices with arguments in terms of adding on subcell $\capO$-algebras; this idea to work with cellular structures appears in Hirschhorn \cite{Hirschhorn} assuming left properness; however, the techniques can be made to work without the left properness assumption as indicated below.

To start, choose any $A_0\subset M$ subcell $\capO$-algebra such that
\begin{itemize}
\item[(i)] $A_0$ has less than $\kappa$ cells
\item[(ii)] $A_0\not\subset L$
\end{itemize}
Here is the main idea, which is essentially a small object argument idea: We would like $L\subset L\cup A_0$ to be a $\TQ^\capA$-equivalence (i.e., we would like $\TQ^\capA_*(L\cup A_0/\!\!/L)=0$), but it might not be. So we do the next best thing. We build $A_1\supset A_0$ such that when we consider the following pushout diagrams in $\AlgO$
\begin{align*}
\xymatrix{
  L\ar[d]\ar[r] & L\cup A_0\ar[d]\ar[r] & L\cup A_1\ar[d]\\
  {*}\ar[r] & L\cup A_0\sslash L\ar[r]^{(\#)} & L\cup A_1\sslash L
}
\end{align*}
which are also homotopy pushout diagrams in $\AlgO$, the map $(\#)$ induces
\begin{align}
\label{eq:map_induced_from_A0_to_A1}
  \TQ^\capA_*(L\cup A_0\sslash L)\rightarrow\TQ^\capA_*(L\cup A_1\sslash L)
\end{align}
the zero map; in other words, we construct $A_1$ by killing off elements in the $\TQ^\capA$-homology groups $\TQ^\capA_*(L\cup A_0\sslash L)$ by attaching subcell $\capO$-algebras to $A_0$, but in a controlled manner. Since $L\cup A_0\subset M$ is a subcell $\capO$-algebra, it follows that $M$ is weakly equivalent to the filtered homotopy colimit
\begin{align*}
  M\Iso\colim_{F_i\subset M}(L\cup A_0\cup F_i)\wequiv \hocolim_{F_i\subset M}(L\cup A_0\cup F_i)
\end{align*}
indexed over all finite $F_i\subset M$ subcell $\capO$-algebras and hence
\begin{align*}
  0=\TQ^\capA_*(M\sslash L)\Iso\colim_{F_i\subset M}\TQ^\capA_*(L\cup A_0\cup F_i\sslash L)
\end{align*}
where the left-hand side is trivial by assumption. Hence for each $0\neq x\in\TQ^\capA_*(L\cup A_0\sslash L)$ there exists a finite $F_x\subset M$ subcell $\capO$-algebra such that the induced map
\begin{align*}
  \TQ^\capA_*(L\cup A_0\sslash L)\rightarrow
  \TQ^\capA_*(L\cup A_0\cup F_x\sslash L)
\end{align*}
sends $x$ to zero. Define $A_1:=(A_0\cup\cup_{x\neq 0}F_x)\subset M$ subcell $\capO$-algebra. By construction the induced map \eqref{eq:map_induced_from_A0_to_A1} on $\TQ^\capA$-homology groups is the zero map. Furthermore, the pushout diagram in $\AlgO$
\begin{align*}
\xymatrix{
  L\cap A_0\ar[d]\ar[r] & L\ar[d]\\
  A_0\ar[r] & L\cup A_0
}
\end{align*}
implies that $L\cup A_0\sslash L\Iso A_0\sslash L\cap A_0$, hence from the cofiber sequence of the form
\begin{align*}
  L\cap A_0\rightarrow A_0\rightarrow L\cup A_0\sslash L
\end{align*}
in $\AlgO$ and its associated long exact sequence in $\TQ^\capA_*$ it follows that $A_1\subset M$ subcell $\capO$-algebra satisfies
\begin{itemize}
\item[(i)] $A_1$ has less than $\kappa$ cells
\item[(ii)] $A_1\not\subset L$
\end{itemize}

Now we repeat the main idea above, but replacing $A_0$ with $A_1$: We would like $L\subset L\cup A_1$ to be a $\TQ^\capA$-equivalence (i.e., we would like $\TQ^\capA_*(L\cup A_1/\!\!/L)=0$), but it might not be. So we do the next best thing. We build $A_2\supset A_1$ such that the induced map $\TQ^\capA_*(L\cup A_1\sslash L)\rightarrow\TQ^\capA_*(L\cup A_2\sslash L)$ is zero by attaching subcell $\capO$-algebras to $A_1$, but in a controlled manner, \dots, and so on: By induction we construct, exactly as above, a sequence of subcell $\capO$-algebras
\begin{align}
\label{eq:sequence_of_subcell_O_algebras}
  A_0\subset A_1 \subset \dots \subset A_n\subset A_{n+1}\subset \dots
\end{align}
satisfying ($n\geq 0$)
\begin{itemize}
\item[(i)] $A_n$ has less than $\kappa$ cells
\item[(ii)] $A_n\not\subset L$
\item[(iii)] $\TQ^\capA_*(L\cup A_n\sslash L)\rightarrow\TQ^\capA_*(L\cup A_{n+1}\sslash L)$ is the zero map
\end{itemize}
Define $A:=\cup_n A_n$. Let's verify that $L\subset L\cup A$ is a $\TQ^\capA$-equivalence; this is the same as checking that $\TQ^\capA_*(L\cup A\sslash L)=0$. Since \eqref{eq:sequence_of_subcell_O_algebras} is a sequence of subcell $\capO$-algebras, it follows that $L\cup A$ is weakly equivalent to the filtered homotopy colimit
\begin{align*}
  L\cup A\Iso\colim_n(L\cup A_n)\wequiv \hocolim_n(L\cup A_n)
\end{align*}
and hence
\begin{align*}
  \TQ^\capA_*(L\cup A\sslash L)\Iso\colim_n\TQ^\capA_*(L\cup A_n\sslash L)
\end{align*}
In particular, each $x\in\TQ^\capA_*(L\cup A\sslash L)$ is represented by an element in $\TQ^\capA_*(L\cup A_n\sslash L)$ for some $n$, and hence it is in the image of the composite map
\begin{align*}
  \TQ^\capA_*(L\cup A_n\sslash L)\rightarrow
  \TQ^\capA_*(L\cup A_{n+1}\sslash L)\rightarrow
  \TQ^\capA_*(L\cup A\sslash L)
\end{align*}
Since the left-hand map is the zero map by construction, this verifies that $x=0$. Hence we have verified $L\subset L\cup A$ is a $\TQ^\capA$-equivalence, which completes the proof.
\end{proof}

The following is closely related to \cite[11.3]{Bousfield_localization_spaces}, \cite[X.2.14]{Goerss_Jardine}, and \cite[5.4]{Jardine_local_homotopy_theory}, together with the subcell ideas in \cite[2.3.8]{Hirschhorn}.

\begin{prop}[Bounded subcell lifting property]
\label{prop:bounded_subcell_lifting_property}
Let $\function{p}{X}{Y}$ be a fibration of $\capO$-algebras. Then the following are equivalent:
\begin{itemize}
\item[(a)] the map $p$ has the right lifting property with respect to every strong cofibration $A\rightarrow B$ of $\capO$-algebras that is a $\TQ^\capA$-equivalence.
\item[(b)] the map $p$ has the right lifting property with respect to every subcell $\capO$-algebra inclusion $A\subset B$ that is a $\TQ^\capA$-equivalence and such that $B$ has less than $\kappa$ cells (Definition \ref{defn:choice_of_large_enough_regular_cardinal}).
\end{itemize}
\end{prop}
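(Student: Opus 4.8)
The plan is to prove the two implications separately, with all the substance in $\mathrm{(b)}\Rightarrow\mathrm{(a)}$. The implication $\mathrm{(a)}\Rightarrow\mathrm{(b)}$ is immediate: every subcell $\capO$-algebra inclusion $A\subset B$ with $B$ having less than $\kappa$ cells is in particular a strong cofibration, so the class of maps in (b) is contained in the class of maps in (a). For the converse, by Proposition \ref{prop:subcell_lifting_property} it is enough to show that, assuming (b), the fibration $p$ has the right lifting property against \emph{every} $\TQ^\capA$-acyclic subcell $\capO$-algebra inclusion $A\subset B$, with no bound on the number of cells of $B$. So I would fix such an inclusion together with a solid lifting problem having top map $g\colon A\to X$ and bottom map $h\colon B\to Y$.

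Second, I would run a Zorn's lemma (transfinite) argument over partial lifts. Consider the poset $\mathcal{P}$ whose objects are pairs $(C,\xi_C)$, where $A\subset C\subset B$ is an intermediate subcell $\capO$-algebra such that $A\subset C$ is a $\TQ^\capA$-equivalence, and $\xi_C\colon C\to X$ satisfies $\xi_C|_A=g$ and $p\xi_C=h|_C$; order $\mathcal{P}$ by inclusion of subcell $\capO$-algebras together with compatibility of lifts. This poset is nonempty, since $(A,g)$ is an object. To apply Zorn I would verify that every chain $\{(C_i,\xi_{C_i})\}$ has an upper bound: the union $C:=\cup_i C_i$ is again a subcell $\capO$-algebra of $B$, the lifts glue to $\xi_C:=\cup_i\xi_{C_i}\colon C\to X$ by the universal property of the filtered colimit $C\iso\colim_i C_i$, and $A\subset C$ is a $\TQ^\capA$-equivalence because $\TQ^\capA$ commutes with the relevant filtered homotopy colimits, so $\TQ^\capA_*(C\sslash A)\iso\colim_i\TQ^\capA_*(C_i\sslash A)=0$.

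Third, let $(C,\xi_C)$ be a maximal object; I claim $C=B$, which finishes the proof since then $\xi_C$ is the desired lift. Suppose instead $C\neq B$. By the two-out-of-three property for $\TQ^\capA$-equivalences (valid since weak equivalences in $\Mod_\capA$ satisfy two-out-of-three and $\TQ^\capA$ is a functor), $A\subset C$ and $A\subset B$ being $\TQ^\capA$-equivalences forces $C\subset B$ to be one as well. The Bounded subcell property (Proposition \ref{prop:bounded_subcell_property}), applied to $C\subset B$, then produces a subcell $\capO$-algebra $D\subset B$ with less than $\kappa$ cells, $D\not\subset C$, and $C\subset C\cup D$ a $\TQ^\capA$-equivalence. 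Invoking the pushout–pullback square of Proposition \ref{prop:pushout_diagram_of_subcell_O_algebras} for $C,D\subset B$, exactly as in the proof of Proposition \ref{prop:bounded_subcell_property} one obtains an isomorphism $C\cup D\sslash C\iso D\sslash(C\cap D)$, whence $\TQ^\capA_*(D\sslash(C\cap D))\iso\TQ^\capA_*(C\cup D\sslash C)=0$; that is, $C\cap D\subset D$ is a $\TQ^\capA$-acyclic subcell inclusion with $D$ having less than $\kappa$ cells. Hence (b) applies to the lifting problem for $p$ along $C\cap D\subset D$ with top map $\xi_C|_{C\cap D}$ and bottom map $h|_D$, yielding $\zeta\colon D\to X$. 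Since $\xi_C$ and $\zeta$ agree on $C\cap D$, the pushout presentation $C\cup D\iso C\amalg_{C\cap D}D$ lets them glue to $\xi_{C\cup D}\colon C\cup D\to X$ extending $\xi_C$ with $p\,\xi_{C\cup D}=h|_{C\cup D}$, and $A\subset C\cup D$ is a $\TQ^\capA$-equivalence since $A\subset C$ and $C\subset C\cup D$ are. Thus $(C\cup D,\xi_{C\cup D})\in\mathcal{P}$ strictly dominates $(C,\xi_C)$, contradicting maximality.

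The main obstacle is this maximal-element step: arranging that the ``new cells'' $D$ glue correctly to the existing partial lift so that the \emph{bounded} hypothesis (b) can be brought to bear. This is precisely where left properness is invoked in Hirschhorn's framework; here it is replaced by the strong cofibration input, which guarantees both that the square of Proposition \ref{prop:pushout_diagram_of_subcell_O_algebras} is a homotopy pushout (so the isomorphism $C\cup D\sslash C\iso D\sslash(C\cap D)$ is valid on $\TQ^\capA$) and that $C\cap D\subset D$ inherits the $\TQ^\capA$-acyclicity needed. The only other point requiring care is the chain condition in Zorn's lemma, which rests on $\TQ^\capA$ commuting with the relevant filtered homotopy colimits of subcell $\capO$-algebras.
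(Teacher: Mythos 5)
Your proposal is correct and follows essentially the same route as the paper's proof: reduce to the unbounded subcell lifting property (Proposition \ref{prop:subcell_lifting_property}), run a Zorn's lemma argument on the poset of partial lifts, and defeat maximality using the bounded subcell property (Proposition \ref{prop:bounded_subcell_property}) together with gluing along the pushout square of Proposition \ref{prop:pushout_diagram_of_subcell_O_algebras}. The only differences are that you make explicit two points the paper leaves implicit---the chain condition in Zorn's lemma (via $\TQ^\capA$ commuting with filtered homotopy colimits) and the verification that $C\cap D\subset D$ is a $\TQ^\capA$-acyclic inclusion with fewer than $\kappa$ cells---both of which are correct and strengthen the write-up.
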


\begin{proof}
The implication $\text{(a)}\Rightarrow\text{(b)}$ is immediate. Conversely, suppose $p$ has the right lifting property with respect to every subcell $\capO$-algebra inclusion $A\subset B$ that is a $\TQ^\capA$-equivalence and such that $B$ has less than $\kappa$ cells. We want to verify that $p$ satisfies the lifting conditions in (a); by the subcell lifting property, it suffices to verify that $p$ satisfies the lifting conditions in Proposition \ref{prop:subcell_lifting_property}(b). Let  $A\subset B$ be a subcell $\capO$-algebra inclusion that is a $\TQ^\capA$-equivalence and consider any left-hand solid commutative diagram of the form
\begin{align}
\label{eq:lifts_and_partial_lifts}
\xymatrix{
  A\ar[d]_-{\subset}\ar[r]^-{g} & X\ar[d]^-{p}\\
  B\ar[r]_-{h}\ar@{.>}[ur]^{\xi} & Y
}\quad\quad
\xymatrix{
  A\ar[d]_-{\subset}\ar[rr]^-{g} && X\ar[d]^-{p}\\
  A_s\ar[r]_-{\subset}\ar@{.>}[urr]^{\xi_s} &
  B\ar[r]_-{h} & Y
}
\end{align}
in $\AlgO$. We want to verify that a lift $\xi$ exists. The idea is to use a Zorn's lemma argument on an appropriate poset $\Omega$ of partial lifts, together with Proposition \ref{prop:bounded_subcell_property}, following closely \cite[X.2.14]{Goerss_Jardine} and \cite[2.3.8]{Hirschhorn}. Denote by $\Omega$ the poset of all pairs $(A_s,\xi_s)$ such that (i) $A_s\subset B$ is a subcell $\capO$-algebra inclusion that is a $\TQ^\capA$-equivalence and (ii) $\function{\xi_s}{A_s}{X}$ is a map in $\AlgO$ that makes the right-hand diagram in \eqref{eq:lifts_and_partial_lifts} commute (i.e., $\xi_s|A=g$ and $p\xi_s=h|A_s$), where $\Omega$ is ordered by the following relation: $(A_s,\xi_s)\leq (A_t,\xi_t)$ if $A_s\subset A_t$ is a subcell $\capO$-algebra inclusion and $\xi_t|A_s=\xi_s$. Then by Zorn's lemma, this set $\Omega$ has a maximal element $(A_m,\xi_m)$.

We want to show that $A_m=B$. Suppose not. Then $A_m\neq B$ and $A_m\subset B$ is a $\TQ^\capA$-equivalence, hence by the bounded subcell property (Proposition \ref{prop:bounded_subcell_property}) there exists $K\subset B$ subcell $\capO$-algebra such that 
\begin{itemize}
\item[(i)] $K$ has less than $\kappa$ cells 
\item[(ii)] $K\not\subset A_m$
\item[(iii)] $A_m\subset A_m\cup K$ is a $\TQ^\capA$-equivalence
\end{itemize}
We have a pushout diagram of the left-hand form
\begin{align*}
\xymatrix{
  A_m\cap K\ar[d]\ar[r] & A_m\ar[d]\\
  K\ar[r] & A_m\cup K
}\quad\quad
\xymatrix{
A_m\cap K\ar[d]\ar[r] & A_m\ar[r]^-{\xi_m} & X\ar[d]^-{p}\\
K\ar[r]\ar@{.>}[urr]^(0.4){\xi} & B\ar[r]_-{h} & Y
}
\end{align*}
in $\AlgO$ where the indicated maps are inclusions, and by assumption on $p$, the right-hand solid commutative diagram in $\AlgO$ has a lift $\xi$. It follows that the induced map $\xi_m\cup\xi$ makes the following diagram
\begin{align*}
\xymatrix{
  A\ar[d]\ar[rrr]^-{g} &&& X\ar[d]^-{p}\\
  A_m\ar[r]\ar@{.>}[urrr]^-(0.4){\xi_m} & 
  A_m\cup K\ar[r]\ar@{.>}@/_0.5pc/[urr]_-(0.7){\xi_m\cup\xi} & B\ar[r]_-{h} & Y
}
\end{align*}
in $\AlgO$ commute, where the unlabeled arrows are the natural inclusions. In particular, since $K\not\subset A_m$, then $A_m\neq A_m\cup K$, and hence we have constructed an element $(A_m\cup K,\xi_m\cup\xi)$ of the set $\Omega$ that is strictly greater than the maximal element $(A_m,\xi_m)$, which is a contradiction. Therefore $A_m=B$ and the desired lift $\xi=\xi_m$ exists, which completes the proof.
\end{proof}

\begin{prop}[Detecting $\TQ^\capA$-local $\capO$-algebras: Part 3]
\label{prop:detecting_TQ_local_O_algebras_part_3}
Let $X$ be a fibrant $\capO$-algebra. Then $X$ is $\TQ^\capA$-local if and only if $X\rightarrow *$ satisfies the right lifting property with respect to every subcell $\capO$-algebra inclusion $A\subset B$ that is a $\TQ^\capA$-equivalence and such that $B$ has less than $\kappa$ cells (Definition \ref{defn:choice_of_large_enough_regular_cardinal}).
\end{prop}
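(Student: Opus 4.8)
The plan is to deduce this immediately from the bounded subcell lifting property (Proposition \ref{prop:bounded_subcell_lifting_property}), in exactly the same way that Proposition \ref{prop:detecting_TQ_local_O_algebras_part_2} was deduced from the (unbounded) subcell lifting property (Proposition \ref{prop:subcell_lifting_property}). The only point to check is that the hypotheses of Proposition \ref{prop:bounded_subcell_lifting_property} are met for the particular map $p = (X\rightarrow *)$.

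First I would observe that, since $X$ is fibrant by assumption, the map $X\rightarrow *$ is a fibration of $\capO$-algebras, so that Proposition \ref{prop:bounded_subcell_lifting_property} applies to $p = (X\rightarrow *)$. Next I would recall from Proposition \ref{prop:detecting_TQ_local_O_algebras_part_1} that $X$ is $\TQ^\capA$-local if and only if $X\rightarrow *$ is a weak $\TQ^\capA$-fibration, i.e., if and only if $X\rightarrow *$ has the right lifting property with respect to every $\TQ^\capA$-acyclic strong cofibration $A\rightarrow B$ of $\capO$-algebras; this is precisely condition (a) of Proposition \ref{prop:bounded_subcell_lifting_property} for the map $p = (X\rightarrow *)$. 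Applying the equivalence $\text{(a)}\Leftrightarrow\text{(b)}$ of Proposition \ref{prop:bounded_subcell_lifting_property} to this $p$ then shows that condition (a) holds if and only if $X\rightarrow *$ has the right lifting property with respect to every subcell $\capO$-algebra inclusion $A\subset B$ that is a $\TQ^\capA$-equivalence and such that $B$ has less than $\kappa$ cells, which is exactly the stated lifting criterion. Chaining these equivalences together gives the result.

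I do not expect any genuine obstacle at this stage: all of the substantive work has already been carried out upstream. The passage from a proper class of test maps (arbitrary $\TQ^\capA$-acyclic strong cofibrations) to a set of test maps (subcell inclusions with fewer than $\kappa$ cells) is precisely the content of the bounded subcell property (Proposition \ref{prop:bounded_subcell_property}) and its consequence, the bounded subcell lifting property (Proposition \ref{prop:bounded_subcell_lifting_property}), whose proofs rely on the cardinality estimate of Definition \ref{defn:choice_of_large_enough_regular_cardinal} together with the homotopy-pushout arguments enabled by the strong cofibration condition. The present statement is therefore a direct reformulation of Proposition \ref{prop:bounded_subcell_lifting_property}, specialized to $X\rightarrow *$, and sets up the bounded lifting criterion in the form needed to run the small object argument for the key factorization (Proposition \ref{prop:key_factorization_needed_for_TQ_local_homotopy_theory}).
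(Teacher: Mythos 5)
Your proposal is correct and matches the paper's own proof, which likewise deduces the statement immediately from Proposition \ref{prop:bounded_subcell_lifting_property} (applied to the fibration $X\rightarrow *$, with Proposition \ref{prop:detecting_TQ_local_O_algebras_part_1} supplying the translation between ``$\TQ^\capA$-local'' and condition (a)). Your write-up simply makes explicit the bookkeeping that the paper leaves implicit.
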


\begin{proof}
This follows immediately from Proposition \ref{prop:bounded_subcell_lifting_property}.
\end{proof}

\begin{prop}
\label{prop:closure_under_retracts}
If $f$ is a retract of $g$ and $g$ is a $\TQ^\capA$-acyclic strong cofibration, then so is $f$.
\end{prop}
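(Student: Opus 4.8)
The plan is to treat the two defining properties of a $\TQ^\capA$-acyclic strong cofibration separately---being a strong cofibration, and being a $\TQ^\capA$-equivalence---and to deduce each by a closure-under-retracts argument. First I would record the retract data as a commutative ladder
\begin{align*}
\xymatrix{
  A\ar[d]_-{f}\ar[r]^-{u} & C\ar[d]^-{g}\ar[r]^-{r} & A\ar[d]^-{f}\\
  B\ar[r]_-{v} & D\ar[r]_-{s} & B
}
\end{align*}
in $\AlgO$ with $ru=\id_A$ and $sv=\id_B$, where $\function{g}{C}{D}$ is the given $\TQ^\capA$-acyclic strong cofibration.

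To verify that $f$ is a strong cofibration, I would first note that $f$ is a cofibration, since cofibrations in a model category are closed under retracts. For the cofibrancy of its endpoints, observe that the ladder exhibits $A$ as a retract of $C$ and $B$ as a retract of $D$; since $*$ is initial, filling in identity maps on $*$ upgrades these to retracts of the arrows $*\rightarrow C$ and $*\rightarrow D$ in the arrow category of $\AlgO$. As $g$ is a strong cofibration, $C$ and $D$ are cofibrant, so $*\rightarrow C$ and $*\rightarrow D$ are cofibrations, and closure of cofibrations under retracts shows $*\rightarrow A$ and $*\rightarrow B$ are cofibrations; i.e., $A$ and $B$ are cofibrant. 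Hence $f$ is a cofibration between cofibrant objects, a strong cofibration.

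To verify that $f$ is a $\TQ^\capA$-equivalence, I would apply the (underived) left adjoint $Q$ to the ladder; functoriality together with $Q(ru)=\id_{Q(A)}$ and $Q(sv)=\id_{Q(B)}$ exhibits $Q(f)$ as a retract of $Q(g)$ in the arrow category of $\Mod_\capA$. Since $C,D$ are cofibrant we have $\TQ^\capA(g)\wequiv Q(g)$, so the hypothesis that $g$ is a $\TQ^\capA$-equivalence is exactly the statement that $Q(g)$ is a weak equivalence; as weak equivalences are closed under retracts, $Q(f)$ is a weak equivalence as well. Since $A,B$ were shown cofibrant above, $\TQ^\capA(f)\wequiv Q(f)$, so $f$ is a $\TQ^\capA$-equivalence, and combining the two parts completes the proof.

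I do not expect a genuine obstacle here; the only step requiring a little care is the identification $\TQ^\capA(-)\wequiv Q(-)$ on cofibrant objects---so that $\TQ^\capA$-equivalences between cofibrant objects are detected by $Q$---which follows from the definition of $\TQ^\capA$ as a derived functor together with the fact that the left Quillen functor $Q$ preserves weak equivalences between cofibrant objects.
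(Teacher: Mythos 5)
Your proposal is correct and follows essentially the same route as the paper's (one-sentence) proof: closure of strong cofibrations and weak equivalences under retracts, together with the fact that the left Quillen functor $Q$ detects $\TQ^\capA$-equivalences between cofibrant objects. You have simply spelled out the details the paper leaves implicit.
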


\begin{proof}
This is because strong cofibrations and weak equivalences are closed under retracts and $Q$ is a left Quillen functor.
\end{proof}

\begin{prop}
\label{prop:pushout_closure}
Consider any pushout diagram of the form
\begin{align}
\label{eq:pushout_diagram_for_closure_prop}
\xymatrix{
  A\ar[d]_-{i}\ar[r] & X\ar[d]^-{j}\\
  B\ar[r] & Y
}
\end{align}
in $\AlgO$. If $X$ is cofibrant and $i$ is a $\TQ^\capA$-acyclic strong cofibration, then $j$ is a $\TQ^\capA$-acyclic strong cofibration.
\end{prop}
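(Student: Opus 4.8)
The plan is to separate the claim into its two independent parts: that $j$ is a strong cofibration, and that $j$ is a $\TQ^\capA$-equivalence. For the first part, I would note that cofibrations are closed under cobase change, so the pushout $j$ of the cofibration $i$ is again a cofibration; since $X$ is cofibrant by hypothesis and ${*}\rightarrow X\xrightarrow{j}Y$ is then a composite of cofibrations, $Y$ is cofibrant, and hence $j$ is a cofibration between cofibrant objects, i.e., a strong cofibration.

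For the $\TQ^\capA$-equivalence part, the strategy is to transport the whole question along the left Quillen functor $\function{Q}{\AlgO}{\Mod_\capA}$. Applying $Q$ to \eqref{eq:pushout_diagram_for_closure_prop} yields a pushout diagram in $\Mod_\capA$ (since $Q$ is a left adjoint), in which $Q(i)$ is a cofibration (since $Q$ is left Quillen). Because $i$ is a $\TQ^\capA$-acyclic strong cofibration, $Q(i)$ is moreover a weak equivalence---recall that for maps between cofibrant objects, being a $\TQ^\capA$-equivalence is exactly the condition that $Q$ applied to the map is a weak equivalence (as exploited in the proof of Proposition \ref{prop:TQ_acyclic_strong_cofibrations_play_nicely_with_tensordot}). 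Thus $Q(i)$ is an acyclic cofibration in $\Mod_\capA$.

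I would then finish by invoking the basic model-category fact that acyclic cofibrations are closed under cobase change: the map $Q(j)$ is the pushout of the acyclic cofibration $Q(i)$, and is therefore itself an acyclic cofibration, in particular a weak equivalence. Since $X$ and $Y$ are cofibrant, $Q(X)\wequiv\TQ^\capA(X)$ and $Q(Y)\wequiv\TQ^\capA(Y)$, so the weak equivalence $Q(j)$ shows that $j$ is a $\TQ^\capA$-equivalence, completing the argument. The one point to watch---and the reason left properness is not needed here---is the orientation: one should \emph{not} try to read off $Q(j)$ from the homotopy-pushout criterion Proposition \ref{prop:certain_pushouts_are_homotopy_pushouts}(a), since the map $Q(A)\rightarrow Q(X)$ one would have to push along need not be a cofibration. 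Passing first to $\Mod_\capA$ via $Q$ and then using only closure of acyclic cofibrations under pushout (a genuine model-category axiom, not a left-properness statement) sidesteps this cleanly; this is the essential step.
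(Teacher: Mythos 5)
Your proof is correct and is essentially the paper's own argument: apply the left Quillen functor $Q$ to the pushout square, observe that $Q(i)$ is an acyclic cofibration in $\Mod_\capA$, and conclude that its cobase change $Q(j)$ is an acyclic cofibration, hence $j$ is a $\TQ^\capA$-equivalence. The only difference is that you also spell out the (routine) verification that $j$ is a strong cofibration, which the paper leaves implicit.
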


\begin{proof}
Applying $Q$ to the diagram \eqref{eq:pushout_diagram_for_closure_prop} gives a pushout diagram of the form
\begin{align*}
\xymatrix{
  Q(A)\ar[d]_-{(*)}\ar[r] & Q(X)\ar[d]^-{(**)}\\
  Q(B)\ar[r] & Q(Y)
}
\end{align*}
in $\AlgO$. Since $(*)$ is an acyclic cofibration by assumption, it follows that $(**)$ is an acyclic cofibration, which completes the proof.
\end{proof}

\begin{prop}
\label{prop:closure_under_small_coproducts_and_transfinite_compositions}
The class of $\TQ^\capA$-acyclic strong cofibrations is (i) closed under all small coproducts and (ii) closed under all (possibly transfinite) compositions.
\end{prop}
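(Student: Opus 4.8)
The plan is to treat each claim as two separate closure statements---closure as \emph{strong cofibrations} and closure as \emph{$\TQ^\capA$-equivalences}---and to dispatch both using standard closure properties of (acyclic) cofibrations together with the fact that $Q$ is a left Quillen functor, hence a left adjoint. The recurring input is that for a strong cofibration $\function{f}{A}{B}$ (a cofibration between cofibrant objects), being a $\TQ^\capA$-equivalence is the same as $Q(f)$ being a weak equivalence, exactly as exploited in the proofs of Propositions \ref{prop:TQ_acyclic_strong_cofibrations_play_nicely_with_tensordot} and \ref{prop:pushout_closure}; since $Q$ preserves cofibrations and cofibrant objects (as $Q(*)=*$), such a $Q(f)$ is then an acyclic cofibration between cofibrant objects in $\Mod_\capA$.

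For (i), suppose $\{\function{f_i}{A_i}{B_i}\}$ is a small family of $\TQ^\capA$-acyclic strong cofibrations. First I would check that $\coprod_i f_i$ is again a strong cofibration: coproducts of cofibrations are cofibrations, and coproducts of cofibrant objects are cofibrant (the coproduct of the cofibrations $*\rightarrow A_i$ is the cofibration $*\rightarrow\coprod_i A_i$, and likewise for the $B_i$). Next, since $Q$ is a left adjoint it commutes with coproducts, so $Q(\coprod_i f_i)\iso\coprod_i Q(f_i)$; as each $Q(f_i)$ is an acyclic cofibration in $\Mod_\capA$ and acyclic cofibrations are closed under coproducts, $Q(\coprod_i f_i)$ is an acyclic cofibration, in particular a weak equivalence. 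Hence $\coprod_i f_i$ is a $\TQ^\capA$-equivalence, and therefore a $\TQ^\capA$-acyclic strong cofibration.

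For (ii), consider a $\lambda$-indexed sequence $A_0\rightarrow A_1\rightarrow\cdots$ in which each successor map is a $\TQ^\capA$-acyclic strong cofibration, with transfinite composite $A_0\rightarrow A_\infty:=\colim_\beta A_\beta$. The same two-part strategy applies. Transfinite compositions of cofibrations are cofibrations, and a transfinite composite of cofibrations starting at the cofibrant object $A_0$ passes through cofibrant objects at successor and limit stages alike, so $A_0\rightarrow A_\infty$ is a strong cofibration. Because $Q$ preserves all colimits we have $Q(A_\infty)\iso\colim_\beta Q(A_\beta)$, exhibiting $Q(A_0)\rightarrow Q(A_\infty)$ as the transfinite composite of the acyclic cofibrations $Q(A_\beta)\rightarrow Q(A_{\beta+1})$; since acyclic cofibrations are closed under transfinite composition, this composite is an acyclic cofibration, hence a weak equivalence, and so $A_0\rightarrow A_\infty$ is a $\TQ^\capA$-equivalence.

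The argument is essentially formal once the right inputs are isolated, so I do not expect a serious obstacle; the only point requiring care is the bookkeeping in (ii)---verifying that every object in the transfinite sequence (including at limit ordinals) is cofibrant, so that the maps are genuinely \emph{strong} cofibrations and the $\TQ^\capA$-equivalence condition is correctly detected by $Q$, and that $Q$, being a left adjoint, commutes with the relevant (possibly transfinite) colimits so that $Q$ of the composite is the composite of the $Q(f_\beta)$.
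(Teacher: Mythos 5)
Your proof is correct and follows exactly the paper's (much terser) argument: the paper likewise reduces both parts to the facts that strong cofibrations are closed under small coproducts and transfinite compositions, and that $Q$ is a left Quillen functor (hence a colimit-preserving left adjoint taking $\TQ^\capA$-acyclic strong cofibrations to acyclic cofibrations in $\Mod_\capA$). Your write-up just makes explicit the bookkeeping (cofibrancy at limit stages, $Q$ commuting with the colimits) that the paper leaves implicit.
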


\begin{proof}
Part (i) is because strong cofibrations are closed under all small coproducts and $Q$ is a left Quillen functor, and part (ii) is because strong cofibrations are closed under all (possibly transfinite) compositions and $Q$ is a left Quillen functor.
\end{proof}

\begin{defn}
\label{defn:generating_cofibrations_and_acyclic_cofibrations}
Denote by $I_{\TQ^\capA}$ the set of generating cofibrations in $\AlgO$ and by $J_{\TQ^\capA}$ the set of generating acyclic cofibrations in $\AlgO$ union the set of $\TQ^\capA$-acyclic strong cofibrations consisting of one representative  of each isomorphism class of subcell $\capO$-algebra inclusions $A\subset B$ that are $\TQ^\capA$-equivalences and such that $B$ has less than $\kappa$ cells (Definition \ref{defn:choice_of_large_enough_regular_cardinal}).
\end{defn}

\begin{prop}
\label{prop:key_factorization_needed_for_TQ_local_homotopy_theory}
Any map $X\rightarrow Y$ of $\capO$-algebras with $X$ cofibrant can be factored as $X\rightarrow X'\rightarrow Y$ a $\TQ^\capA$-acyclic strong cofibration followed by a weak $\TQ^\capA$-fibration.
\end{prop}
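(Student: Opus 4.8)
The plan is to run the small object argument with respect to the set $J_{\TQ^\capA}$ (Definition \ref{defn:generating_cofibrations_and_acyclic_cofibrations}) on the map $X\rightarrow Y$. Since $\AlgO$ is cofibrantly generated and the domains of the maps in $J_{\TQ^\capA}$ are small (the generating acyclic cofibrations have small domains, and each subcell $\capO$-algebra inclusion $A\subset B$ with $B$ having fewer than $\kappa$ cells has small domain $A$), the small object argument produces a factorization $X\xrightarrow{i}X'\xrightarrow{p}Y$ in which $i$ is a transfinite composition of pushouts of coproducts of maps in $J_{\TQ^\capA}$ and $p$ has the right lifting property with respect to every map in $J_{\TQ^\capA}$. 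It then suffices to check that $i$ is a $\TQ^\capA$-acyclic strong cofibration and that $p$ is a weak $\TQ^\capA$-fibration.

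First I would dispatch the map $p$. Because $J_{\TQ^\capA}$ contains the generating acyclic cofibrations of $\AlgO$, having the right lifting property against $J_{\TQ^\capA}$ forces $p$ to be a fibration. Moreover, $p$ has the right lifting property with respect to every subcell $\capO$-algebra inclusion $A\subset B$ that is a $\TQ^\capA$-equivalence with $B$ having fewer than $\kappa$ cells; this is exactly condition (b) of the bounded subcell lifting property (Proposition \ref{prop:bounded_subcell_lifting_property}). Hence $p$ has the right lifting property with respect to every $\TQ^\capA$-acyclic strong cofibration; in other words, $p$ is a weak $\TQ^\capA$-fibration.

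For the map $i$, I would first observe that every map in $J_{\TQ^\capA}$ is itself a $\TQ^\capA$-acyclic strong cofibration: the generating acyclic cofibrations are strong cofibrations and weak equivalences (hence $\TQ^\capA$-equivalences) as noted in the proof of Proposition \ref{prop:relations_between_classes_of_maps}, and the chosen subcell inclusions are $\TQ^\capA$-acyclic strong cofibrations by definition. The class of $\TQ^\capA$-acyclic strong cofibrations is closed under coproducts and transfinite compositions (Proposition \ref{prop:closure_under_small_coproducts_and_transfinite_compositions}) and under retracts (Proposition \ref{prop:closure_under_retracts}). The subtle point is pushouts: Proposition \ref{prop:pushout_closure} applies only when the object along which one pushes is cofibrant, and this is precisely where the hypothesis that $X$ is cofibrant is used. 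Writing the cell construction as $X=Z_0\rightarrow Z_1\rightarrow\cdots$, I would argue by transfinite induction that each stage $Z_n$ is cofibrant---$Z_0=X$ is cofibrant, each $Z_n\rightarrow Z_{n+1}$ is a (strong) cofibration, and colimits of cofibrant objects along cofibrations remain cofibrant, so cofibrancy persists at successor and limit stages. At each stage the coproduct $\coprod_i A_i\rightarrow\coprod_i B_i$ is a $\TQ^\capA$-acyclic strong cofibration by Proposition \ref{prop:closure_under_small_coproducts_and_transfinite_compositions}(i), and its pushout $Z_n\rightarrow Z_{n+1}$ is therefore a $\TQ^\capA$-acyclic strong cofibration by Proposition \ref{prop:pushout_closure} (using that $Z_n$ is cofibrant). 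The transfinite composition closure in Proposition \ref{prop:closure_under_small_coproducts_and_transfinite_compositions}(ii) then shows that $i$ is a $\TQ^\capA$-acyclic strong cofibration, which completes the proof.

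The main obstacle is exactly the maintenance of cofibrancy through the successive cell attachments so that Proposition \ref{prop:pushout_closure} can be invoked at every stage; in the left proper cellular setting of Hirschhorn \cite{Hirschhorn} this would be automatic, and it is the reason the hypothesis that $X$ is cofibrant is imposed and that \emph{strong} cofibrations (cofibrations between cofibrant objects) are the appropriate class to localize against.
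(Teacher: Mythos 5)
Your proof is correct and follows essentially the same route as the paper's: run the small object argument with respect to $J_{\TQ^\capA}$, identify $p$ as a weak $\TQ^\capA$-fibration via the fibration observation and Proposition \ref{prop:bounded_subcell_lifting_property}, and identify the left factor as a $\TQ^\capA$-acyclic strong cofibration via Propositions \ref{prop:pushout_closure} and \ref{prop:closure_under_small_coproducts_and_transfinite_compositions}. The only difference is that you spell out the transfinite induction maintaining cofibrancy of the intermediate stages (precisely where the hypothesis that $X$ is cofibrant enters, so that Proposition \ref{prop:pushout_closure} applies), a detail the paper leaves implicit in its citation of those two propositions.
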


\begin{proof}
We know by \cite[12.4]{Hirschhorn} that the set $J_{\TQ^\capA}$ permits the small object argument \cite[10.5.15]{Hirschhorn}, and running the small object argument for the map $X\rightarrow Y$ with respect to $J_{\TQ^\capA}$ produces a functorial factorization of the form
\begin{align*}
  X\xrightarrow{j}X'\xrightarrow{p} Y
\end{align*}
in $\AlgO$. We know that $j$ is a $\TQ^\capA$-acyclic strong cofibration by Propositions \ref{prop:pushout_closure} and \ref{prop:closure_under_small_coproducts_and_transfinite_compositions}. Since $J_{\TQ^\capA}$ contains the set of generating acyclic cofibrations for $\AlgO$, we know that $p$ is a fibration of $\capO$-algebras, and hence it follows from Proposition \ref{prop:bounded_subcell_lifting_property} that $p$ is a weak $\TQ^\capA$-fibration, which completes the proof.
\end{proof}

\begin{prop}
\label{prop:right_lifting_property_characterization_set_of_maps}
Suppose $\function{p}{X}{Y}$ is a map of $\capO$-algebras.
\begin{itemize}
\item[(a)] The map $p$ is a weak $\TQ^\capA$-fibration if and only if it satisfies the right lifting property with respect to the set of maps $J_{\TQ^\capA}$ (Definition \ref{defn:generating_cofibrations_and_acyclic_cofibrations}).
\item[(b)] The map $p$ is a $\TQ^\capA$-acyclic weak $\TQ^\capA$-fibration if and only if it satisfies the right lifting property with respect to the set of maps $I_{\TQ^\capA}$ (Definition \ref{defn:generating_cofibrations_and_acyclic_cofibrations}).
\end{itemize}
\end{prop}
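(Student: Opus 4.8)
The plan is to deduce both characterizations directly from the propositions already established, so that the real content has been front-loaded into earlier results and the two parts can be treated independently. For part (a), I would first dispatch the easy direction: if $p$ is a weak $\TQ^\capA$-fibration, then by definition it has the right lifting property with respect to every strong cofibration that is a $\TQ^\capA$-equivalence. The generating acyclic cofibrations of $\AlgO$ have cofibrant domains and are weak equivalences, hence are strong cofibrations that are $\TQ^\capA$-equivalences (as in the proof of Proposition \ref{prop:relations_between_classes_of_maps}), and the chosen subcell inclusions comprising the rest of $J_{\TQ^\capA}$ are $\TQ^\capA$-acyclic strong cofibrations by construction (Definition \ref{defn:generating_cofibrations_and_acyclic_cofibrations}). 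Thus $p$ lifts against all of $J_{\TQ^\capA}$.

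For the converse direction of (a), suppose $p$ has the right lifting property with respect to $J_{\TQ^\capA}$. Since $J_{\TQ^\capA}$ contains the generating acyclic cofibrations of $\AlgO$, the map $p$ is a fibration of $\capO$-algebras; this is the step I would carry out first, since the bounded subcell lifting property (Proposition \ref{prop:bounded_subcell_lifting_property}) is only available for fibrations. Now $p$ also lifts against the chosen representatives of isomorphism classes of subcell inclusions $A\subset B$ that are $\TQ^\capA$-equivalences with $B$ having fewer than $\kappa$ cells; because the right lifting property is invariant under isomorphism, $p$ lifts against every such subcell inclusion, i.e.\ condition (b) of Proposition \ref{prop:bounded_subcell_lifting_property} holds. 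That proposition then upgrades this to condition (a): $p$ lifts against every $\TQ^\capA$-acyclic strong cofibration, which is precisely the assertion that $p$ is a weak $\TQ^\capA$-fibration.

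For part (b), I would route everything through Proposition \ref{prop:TQ_acyclic_TQ_fibrations}. A $\TQ^\capA$-acyclic weak $\TQ^\capA$-fibration is exactly condition (i) there, so by the equivalence of conditions (i) and (iii) it coincides with the class of maps that are simultaneously fibrations and weak equivalences, i.e.\ the acyclic fibrations of $\AlgO$. The standard characterization in the cofibrantly generated model structure on $\AlgO$ then says that a map is an acyclic fibration if and only if it has the right lifting property with respect to the generating cofibrations $I_{\TQ^\capA}$. Chaining these two equivalences yields the claim.

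I do not anticipate a genuine obstacle: the substantive work lives in Propositions \ref{prop:bounded_subcell_lifting_property} and \ref{prop:TQ_acyclic_TQ_fibrations}. The single point requiring care is the ordering in the converse of (a)---one must first extract fibrancy of $p$ from the generating acyclic cofibrations before invoking the bounded subcell lifting property, since the latter is stated only for fibrations.
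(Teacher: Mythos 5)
Your proposal is correct and follows essentially the same route as the paper: part (a) is exactly the argument embedded in the proof of Proposition \ref{prop:key_factorization_needed_for_TQ_local_homotopy_theory} (fibrancy from the generating acyclic cofibrations first, then Proposition \ref{prop:bounded_subcell_lifting_property}), and part (b) is the paper's reduction to acyclic fibrations via Proposition \ref{prop:TQ_acyclic_TQ_fibrations}. You merely spell out details the paper leaves implicit, such as the invariance of lifting properties under isomorphism of maps.
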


\begin{proof}
Part (a) was verified in the proof of Proposition \ref{prop:key_factorization_needed_for_TQ_local_homotopy_theory} and part (b) is because $p$ is an acyclic fibration (Proposition \ref{prop:TQ_acyclic_TQ_fibrations}).
\end{proof}

Our main result, Theorem \ref{thm:TQ_local_homotopy_theory}, is that the $\TQ^\capA$-local homotopy theory for $\capO$-algebras (associated to the classes of maps in Definition 
\ref{defn:classes_of_maps_TQ_local_homotopy_theory}) can be established (e.g., as a semi-model structure in the sense of Goerss-Hopkins \cite{Goerss_Hopkins} and Spitzweck \cite{Spitzweck_thesis}, that is both cofibrantly generated and simplicial) by localizing with respect to a set of strong cofibrations that are $\TQ^\capA$-equivalences; see, for instance,  Mandell \cite{Mandell}, White \cite{White}, and White-Yau \cite{White_Yau} where semi-model structures naturally arise in some interesting applications. A closely related (but different) notion of semi-model structure is explored in Fresse \cite{Fresse}.

\begin{thm}[$\TQ^\capA$-local homotopy theory: Semi-model structure]
\label{thm:TQ_local_homotopy_theory}
The category $\AlgO$ with the three distinguished classes of maps (i) $\TQ^\capA$-equivalences, (ii)
weak $\TQ^\capA$-fibrations, and (iii) cofibrations, each closed under composition and containing all isomorphisms, has the structure of a semi-model category in the sense of Goerss-Hopkins \cite[1.1.6]{Goerss_Hopkins_moduli_problems}; in more detail:
\begin{itemize}
\item[(a)] The category $\AlgO$ has all small limits and colimits.
\item[(b)] $\TQ^\capA$-equivalences, 
weak $\TQ^\capA$-fibrations, and cofibrations are each closed under retracts; weak $\TQ^\capA$-fibrations and $\TQ^\capA$-acyclic weak $\TQ^\capA$-fibrations are each closed under pullbacks.
\item[(c)] If $f$ and $g$ are maps in $\AlgO$ such that $gf$ is defined and if two of the three maps $f,g,gf$ are $\TQ^\capA$-equivalences, then so is the third.
\item[(d)] Cofibrations have the left lifting property with respect to $\TQ^\capA$-acyclic weak $\TQ^\capA$-fibrations, and $\TQ^\capA$-acyclic cofibrations with  
\textbf{cofibrant domains} have the left lifting property with respect to weak $\TQ^\capA$-fibrations.
\item[(e)] Every map can be functorially factored as a cofibration followed by a $\TQ^\capA$-acyclic weak $\TQ^\capA$-fibration and every map with \textbf{cofibrant domain} can be functorially factored as a $\TQ^\capA$-acyclic cofibration followed by a weak $\TQ^\capA$-fibration.
\end{itemize}
Furthermore, this semi-model structure is cofibrantly generated in the sense of Goerss-Hopkins \cite[1.1.7]{Goerss_Hopkins_moduli_problems} with generating cofibrations the set $I_{\TQ^\capA}$ and generating $\TQ^\capA$-acyclic cofibrations the set $J_{\TQ^\capA}$ (Definition \ref{defn:generating_cofibrations_and_acyclic_cofibrations}), and it is simplicial in the sense of \cite[1.1.8]{Goerss_Hopkins_moduli_problems}.
\end{thm}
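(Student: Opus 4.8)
The plan is to verify the seven labeled conditions (a)--(e), plus cofibrant generation and the simplicial structure, by assembling the propositions already established in the excerpt; almost nothing new needs to be proved, since the hard technical content lives in Propositions \ref{prop:bounded_subcell_lifting_property}, \ref{prop:key_factorization_needed_for_TQ_local_homotopy_theory}, and \ref{prop:right_lifting_property_characterization_set_of_maps}. First I would dispatch the easy structural items: part (a) holds because $\AlgO$ already has all small limits and colimits (it is a model category); part (c) is the two-out-of-three property for $\TQ^\capA$-equivalences, which follows immediately from the two-out-of-three property for weak equivalences in $\Mod_\capA$ applied to $Q$ of the maps, since $\TQ^\capA$-equivalences are by definition maps sent to weak equivalences by the derived functor $\LL Q$.

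For part (b), closure under retracts of the three classes is routine: cofibrations and weak equivalences are retract-closed in $\AlgO$, $\TQ^\capA$-equivalences are retract-closed because $Q$ preserves retracts and weak equivalences in $\Mod_\capA$ are retract-closed, and weak $\TQ^\capA$-fibrations are retract-closed because they are characterized by a right lifting property (Proposition \ref{prop:right_lifting_property_characterization_set_of_maps}(a)), and any class defined by a right lifting property is automatically closed under retracts. Closure under pullback of weak $\TQ^\capA$-fibrations and $\TQ^\capA$-acyclic weak $\TQ^\capA$-fibrations follows the same way from their right-lifting-property characterizations in Proposition \ref{prop:right_lifting_property_characterization_set_of_maps}(a) and (b). The lifting axiom (d) is handled by the same characterizations: cofibrations lift against $\TQ^\capA$-acyclic weak $\TQ^\capA$-fibrations because the latter are exactly the maps with the right lifting property against $I_{\TQ^\capA}$ (Proposition \ref{prop:right_lifting_property_characterization_set_of_maps}(b)) and $I_{\TQ^\capA}$ generates the cofibrations; the restricted lifting statement (for $\TQ^\capA$-acyclic cofibrations \emph{with cofibrant domain} against weak $\TQ^\capA$-fibrations) is where the ``semi'' enters and is obtained by factoring via Proposition \ref{prop:key_factorization_needed_for_TQ_local_homotopy_theory} and running a retract argument. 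The factorization axiom (e) is essentially immediate: the cofibration followed by $\TQ^\capA$-acyclic weak $\TQ^\capA$-fibration factorization is the small-object-argument factorization against $I_{\TQ^\capA}$ (using Proposition \ref{prop:right_lifting_property_characterization_set_of_maps}(b)), and the second factorization, valid only for maps with cofibrant domain, is exactly Proposition \ref{prop:key_factorization_needed_for_TQ_local_homotopy_theory}.

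The main obstacle, and the place demanding genuine care rather than bookkeeping, is the cofibrant-domain hypothesis that pervades (d) and (e): this is precisely the reflection of the failure of left properness (Remark \ref{rem:left_properness}) and the reason we land in a \emph{semi}-model structure rather than a model structure. I would make sure the retract argument proving the restricted half of (d) only ever applies the factorization (Proposition \ref{prop:key_factorization_needed_for_TQ_local_homotopy_theory}) to a map whose domain is already known to be cofibrant, so that the hypothesis is never violated; concretely, given a $\TQ^\capA$-acyclic cofibration $i\colon A\to B$ with $A$ cofibrant (hence $B$ cofibrant) and a weak $\TQ^\capA$-fibration $p$, one factors $i$ as a $\TQ^\capA$-acyclic strong cofibration followed by a weak $\TQ^\capA$-fibration, lifts against the strong-cofibration part using the definition of weak $\TQ^\capA$-fibration, and exhibits $i$ as a retract of the strong cofibration to transport the lift. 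Finally, for cofibrant generation I would simply invoke Proposition \ref{prop:right_lifting_property_characterization_set_of_maps}: $I_{\TQ^\capA}$ and $J_{\TQ^\capA}$ detect precisely the $\TQ^\capA$-acyclic weak $\TQ^\capA$-fibrations and the weak $\TQ^\capA$-fibrations respectively, and both sets permit the small object argument by \cite[12.4]{Hirschhorn}; and for the simplicial structure I would invoke Proposition \ref{prop:pullback_corner_map}, whose pullback-corner-map statement is exactly the SM7 axiom in the $\TQ^\capA$-local classes of maps, noting that $\AlgO$ is already simplicially enriched via $\tensordot$ and $\Hombold$ from the ambient model structure.
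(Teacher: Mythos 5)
Your proposal is correct and follows essentially the same route as the paper: parts (a)--(c) from the underlying model structure and two-out-of-three for weak equivalences, part (b) and cofibrant generation via the lifting characterizations in Proposition \ref{prop:right_lifting_property_characterization_set_of_maps}, part (e) via the small object argument on $I_{\TQ^\capA}$ together with Proposition \ref{prop:key_factorization_needed_for_TQ_local_homotopy_theory}, and the simplicial axiom via Proposition \ref{prop:pullback_corner_map}. The one place you overcomplicate is the second half of (d): as you yourself note parenthetically, a $\TQ^\capA$-acyclic cofibration with cofibrant domain has cofibrant codomain, hence is already a $\TQ^\capA$-acyclic \emph{strong} cofibration, so the lifting property against weak $\TQ^\capA$-fibrations is immediate from Definition \ref{defn:classes_of_maps_TQ_local_homotopy_theory}(iv) (this is what the paper does), making your factorization-plus-retract argument a valid but unnecessary detour.
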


\begin{proof}
Part (a) follows from the usual model structure on $\capO$-algebras (see, for instance, \cite{Harper_Hess}). Consider part (b). It is immediate that $\TQ^\capA$-equivalences are closed under retracts (since weak equivalences are). We know that cofibrations are closed under retracts (e.g., by the usual model structure on $\capO$-algebras). Noting that any right lifting property is closed under retracts and pullbacks, together with Proposition \ref{prop:right_lifting_property_characterization_set_of_maps}, verifies part (b). Part (c) is because weak equivalences satisfy the two-out-of-three property. Part (d) follows from Proposition \ref{prop:TQ_acyclic_TQ_fibrations} and Definition \ref{defn:classes_of_maps_TQ_local_homotopy_theory}. The first factorization in part (e) follows from Proposition \ref{prop:TQ_acyclic_TQ_fibrations} by running the small object argument with respect to the set $I_{\TQ^\capA}$ and the second factorization in part (e) is Proposition \ref{prop:key_factorization_needed_for_TQ_local_homotopy_theory} (obtained by running the small object argument with respect to the set $J_{\TQ^\capA}$). This semi-model structure is cofibrantly generated in the sense of \cite[1.1.7]{Goerss_Hopkins_moduli_problems} by Proposition \ref{prop:right_lifting_property_characterization_set_of_maps} and is simplicial in the sense of \cite[1.1.8]{Goerss_Hopkins_moduli_problems} by Proposition \ref{prop:pullback_corner_map}.
\end{proof}

\begin{defn}
\label{defn:weak_TQ_fibrant}
An $\capO$-algebra $X$ is called \emph{$\TQ^\capA$-fibrant} (resp. \emph{weak $\TQ^\capA$-fibrant}) if $X\rightarrow *$ is a $\TQ^\capA$-fibration (resp. weak $\TQ^\capA$-fibration).
\end{defn}

\begin{prop}
An $\capO$-algebra $X$ is $\TQ^\capA$-local if and only if it is weak $\TQ^\capA$-fibrant.
\end{prop}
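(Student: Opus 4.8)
The plan is to deduce this essentially immediately from Proposition \ref{prop:detecting_TQ_local_O_algebras_part_1}, which already establishes that \emph{for a fibrant $\capO$-algebra} $X$, being $\TQ^\capA$-local is equivalent to $X\rightarrow *$ being a weak $\TQ^\capA$-fibration (i.e., to $X$ being weak $\TQ^\capA$-fibrant in the sense of Definition \ref{defn:weak_TQ_fibrant}). The only apparent gap between that proposition and the present statement is the fibrancy hypothesis: the current statement carries no a priori fibrancy assumption on $X$. Hence the real content of the proof is to check that fibrancy is automatic on both sides of the desired equivalence, so that Proposition \ref{prop:detecting_TQ_local_O_algebras_part_1} can be invoked without additional hypotheses.

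For the forward direction, I would argue as follows. Suppose $X$ is $\TQ^\capA$-local. By Definition \ref{defn:TQ_local_O_algebras}, $\TQ^\capA$-locality already \emph{includes} the requirement that $X$ be fibrant in $\AlgO$. Therefore Proposition \ref{prop:detecting_TQ_local_O_algebras_part_1} applies directly and yields that $X\rightarrow *$ is a weak $\TQ^\capA$-fibration; that is, $X$ is weak $\TQ^\capA$-fibrant.

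For the converse, the key observation is that the weak $\TQ^\capA$-fibration condition already encodes fibrancy. Suppose $X$ is weak $\TQ^\capA$-fibrant, so that $X\rightarrow *$ is a weak $\TQ^\capA$-fibration. By the chain of implications in Proposition \ref{prop:relations_between_classes_of_maps} (every weak $\TQ^\capA$-fibration is a fibration), the map $X\rightarrow *$ is in particular a fibration, and hence $X$ is fibrant in $\AlgO$. With fibrancy now in hand, Proposition \ref{prop:detecting_TQ_local_O_algebras_part_1} applies once more and shows that $X$ is $\TQ^\capA$-local.

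There is no substantial obstacle here; the proof is a routine bookkeeping argument. The one point worth making explicit is precisely that fibrancy need not be assumed separately in the converse direction, since it is forced by Proposition \ref{prop:relations_between_classes_of_maps}, and that in the forward direction it is built into the definition of $\TQ^\capA$-locality. This is what allows Proposition \ref{prop:detecting_TQ_local_O_algebras_part_1} to be applied cleanly in both directions and completes the proof.
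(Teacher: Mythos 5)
Your proof is correct and follows essentially the same route as the paper: the paper's own proof simply cites Proposition \ref{prop:detecting_TQ_local_O_algebras_part_1} together with Remark \ref{rem:dropping_fibrancy_assumption}, and your argument is exactly an unpacking of that remark, since the fibrancy-is-automatic observation you route through Proposition \ref{prop:relations_between_classes_of_maps} rests on the same fact (generating acyclic cofibrations have cofibrant domains) that the remark invokes. Your explicit treatment of both directions is a fine, slightly more detailed, rendering of the paper's two-line proof.
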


\begin{proof}
This follows from Proposition \ref{prop:detecting_TQ_local_O_algebras_part_1} and Remark \ref{rem:dropping_fibrancy_assumption}.
\end{proof}

Let $X$ be an $\capO$-algebra and run the small object argument with respect to the set $I_{\TQ^\capA}$ for the map $*\rightarrow X$; this gives a functorial factorization in $\AlgO$ as a cofibration followed by an acyclic fibration $*\rightarrow \tilde{X}\xrightarrow{\wequiv} X$; in particular, $\tilde{X}$ is cofibrant. Now run the small object argument with respect to the set $J_{\TQ^\capA}$ for the map $\tilde{X}\rightarrow *$; this gives a functorial factorization in $\AlgO$ as $\tilde{X}\rightarrow L(\tilde{X})\rightarrow *$ a $\TQ^\capA$-acyclic strong cofibration followed by a weak $\TQ^\capA$-fibration; in particular, $L(\tilde{X})$ is $\TQ^\capA$-local and the natural zigzag $X\wequiv\tilde{X}\rightarrow L(\tilde{X})$ is a $\TQ^\capA$-equivalence. Hence we have verified the following theorem.

\begin{thm}
If $X$ is an $\capO$-algebra, then (i) there is a natural zigzag of $\TQ^\capA$-equivalences of the form $X\wequiv \tilde{X}\rightarrow L_{\TQ^\capA}(\tilde{X})$ with $\TQ^\capA$-local codomain, and if furthermore $X$ is cofibrant, then (ii) there is a natural $\TQ^\capA$-equivalence of the form $X\rightarrow L_{\TQ^\capA}(X)$ with $\TQ^\capA$-local codomain.
\end{thm}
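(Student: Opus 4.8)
The plan is to assemble results already in hand: functorial cofibrant replacement via the small object argument on $I_{\TQ^\capA}$, followed by the key factorization of Proposition \ref{prop:key_factorization_needed_for_TQ_local_homotopy_theory}. For part (i), I would first run the small object argument with respect to the set $I_{\TQ^\capA}$ of generating cofibrations on the map $*\rightarrow X$; this produces a functorial factorization $*\rightarrow\tilde{X}\xrightarrow{\wequiv} X$ as a cofibration followed by an acyclic fibration, so that $\tilde{X}$ is cofibrant and the map $\tilde{X}\rightarrow X$ is a weak equivalence, hence a $\TQ^\capA$-equivalence by Proposition \ref{prop:relations_between_classes_of_maps}.

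Next, since $\tilde{X}$ is cofibrant, I would apply Proposition \ref{prop:key_factorization_needed_for_TQ_local_homotopy_theory} to the map $\tilde{X}\rightarrow *$, obtaining a functorial factorization $\tilde{X}\rightarrow L_{\TQ^\capA}(\tilde{X})\rightarrow *$ as a $\TQ^\capA$-acyclic strong cofibration followed by a weak $\TQ^\capA$-fibration. The map $\tilde{X}\rightarrow L_{\TQ^\capA}(\tilde{X})$ is then a $\TQ^\capA$-equivalence by construction, and the fact that $L_{\TQ^\capA}(\tilde{X})\rightarrow *$ is a weak $\TQ^\capA$-fibration says exactly that $L_{\TQ^\capA}(\tilde{X})$ is weak $\TQ^\capA$-fibrant, hence $\TQ^\capA$-local by Proposition \ref{prop:detecting_TQ_local_O_algebras_part_1} together with Remark \ref{rem:dropping_fibrancy_assumption}. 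Stitching these together yields the natural zigzag $X\wequiv\tilde{X}\rightarrow L_{\TQ^\capA}(\tilde{X})$ of $\TQ^\capA$-equivalences with $\TQ^\capA$-local codomain; naturality is inherited directly from the functoriality of the two small object argument factorizations.

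For part (ii), when $X$ is already cofibrant the cofibrant replacement step is unnecessary, so I would apply Proposition \ref{prop:key_factorization_needed_for_TQ_local_homotopy_theory} directly to $X\rightarrow *$, whose cofibrant-domain hypothesis is precisely the extra assumption on $X$. This produces $X\rightarrow L_{\TQ^\capA}(X)\rightarrow *$ with first map a $\TQ^\capA$-acyclic strong cofibration, hence a $\TQ^\capA$-equivalence, and with $L_{\TQ^\capA}(X)$ weak $\TQ^\capA$-fibrant, hence $\TQ^\capA$-local by the same identification. This gives the desired natural $\TQ^\capA$-equivalence $X\rightarrow L_{\TQ^\capA}(X)$ directly, with no zigzag.

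There is no genuine obstacle here, as the real work has been carried out upstream; the one point that must be kept in view is that Proposition \ref{prop:key_factorization_needed_for_TQ_local_homotopy_theory} carries a \emph{cofibrant-domain} hypothesis. This is exactly why part (i) must first replace $X$ by the cofibrant $\tilde{X}$ (forcing a zigzag rather than a single map), and why part (ii), which already has cofibrancy, yields a single comparison map. Everything else reduces to recalling that both weak equivalences and $\TQ^\capA$-acyclic strong cofibrations are $\TQ^\capA$-equivalences, and that weak $\TQ^\capA$-fibrant is synonymous with $\TQ^\capA$-local.
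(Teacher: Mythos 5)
Your proposal is correct and follows essentially the same route as the paper: cofibrant replacement via the small object argument on $I_{\TQ^\capA}$, then the factorization of Proposition \ref{prop:key_factorization_needed_for_TQ_local_homotopy_theory} (itself the small object argument on $J_{\TQ^\capA}$) applied to $\tilde{X}\rightarrow *$, with $\TQ^\capA$-locality of the codomain identified through Proposition \ref{prop:detecting_TQ_local_O_algebras_part_1} and Remark \ref{rem:dropping_fibrancy_assumption}, and part (ii) obtained by skipping the replacement step when $X$ is already cofibrant. Your emphasis on the cofibrant-domain hypothesis as the reason for the zigzag in part (i) matches the paper's logic exactly.
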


\begin{proof}
Taking $L_{\TQ^\capA}(\tilde{X}):=L(\tilde{X})$ for part (i) and $L_{\TQ^\capA}(X):=L(X)$ for part (ii) completes the proof.
\end{proof}

\bibliographystyle{plain}
\bibliography{TQLocalization}

\end{document}